\newcommand{\cmark}{\ding{51}}%
\newcommand{\xmark}{\ding{55}}%
\theoremstyle{plain}
\newtheorem{thm}{Theorem}
\newtheorem{conj}{Conjecture}
\newtheorem{quest}[conj]{Question}
\newcommand{\olx}{\overline{x}}
\newcommand{\oly}{\overline{y}}
\title{Exact solution of some quarter plane walks with interacting boundaries}
\author{N.\ R.\ Beaton\thanks{\href{mailto:nrbeaton@unimelb.edu.au}{nrbeaton@unimelb.edu.au}} \hspace{1cm}
A.\ L.\ Owczarek\thanks{\href{mailto:owczarek@unimelb.edu.au}{owczarek@unimelb.edu.au}}\\
\small School of Mathematics and Statistics\\
\small The University of Melbourne, Victoria 3010, Australia
\and
A.\ Rechnitzer\thanks{\href{mailto:andrewr@math.ubc.ca}{andrewr@math.ubc.ca}}\\
\small Department of Mathematics\\
\small The University of British Columbia, Vancouver V6T 1Z2, British Columbia, Canada
}
\begin{document}

\maketitle
\begin{abstract}
  The set of random walks with different step sets (of short steps) in the quarter plane has provided a rich set of models that have profoundly different integrability properties. In particular, 23 of the 79 effectively different models can be shown to have generating functions that are algebraic or differentiably finite. Here we investigate how this integrability may change in those 23 models where in addition to length one also counts the number of sites of the walk touching either the horizontal and/or vertical boundaries of the quarter plane. This is equivalent to introducing interactions with those boundaries in a statistical mechanical context. We are able to solve for the generating function in a number of cases. For example, when counting the total number of boundary sites without differentiating whether they are horizontal or vertical, we can solve the generating function of a generalised Kreweras model. However, in many instances we are not able to solve as the kernel methodology seems to break down when including counts with the boundaries.
\end{abstract}

\section{Introduction}

The study of lattice random walks has a long history \cite{barber1970, gessel1998, polya1921, spitzer2013}.
It is relatively straightforward to analyse random walks with different step sets in two and three dimensions where the walks are unrestricted.
Once boundaries are introduced to restrict the region the problem can become far more difficult and in some cases no solution has been found for what might seem simple regions.
One restriction that has received much attention is to consider walks in the first quadrant of the plane with some subset of the 8 ``short'' steps $\{\text{N, E, S, W, NE, NW, SE, SW}\}$
--- see, amongst many others \cite{bostan2009automatic, bousquet2002counting, bousquet-melou_walks_2010, dreyfus2018, fayolle1999random, gessel1992, kauers2009, raschel2012}.
As usual from a combinatorial viewpoint the length generating function of all walk configurations is the mathematical object of interest.

In \cite{bostan2009automatic, bousquet-melou_walks_2010, mishna2009} the 256 possible walk models are distilled into different equivalence classes.
Of these classes, 79 (representing 138 distinct step sets) were identified as being non-trivial.
Of these 79, only 23 were determined to have algebraic or D-finite \cite{lipshitz1989, stanley2001} generating functions.
In this paper we consider those 23  cases and extend the model by counting the number of times the random walk visits the horizontal and vertical boundaries.
In the statistical physics and probability literatures this type of extension is described as adding an \emph{interaction} on the boundary.
Interaction problems are important as they often lead to so-called phase transitions \cite{buks2015} where the behaviour of the system changes markedly as the variable corresponding to surface visits is varied \cite{tabbara2013, tabbara2016}.
Here our interest is in how counting the visits to the boundary changes the solvability of the problem and the analytic character of the generating function.
In other words, how sensitive is that analytic character to the addition of boundary interactions?

\subsection{The model}
Consider a random walk on $\mathbb{Z}^2$ with steps $\mathcal{S}$ taken from the set $\{-1,0,1\}^2\setminus\{(0,0)\}$.
The walks are restricted to lie in the non-negative quadrant --- such random walks are known as \emph{quarter plane walks}.
A now classical problem is to enumerate the number of such random walks of length $n$ starting at the origin and then ending back at the origin, or ending anywhere in the quarter plane.
In this paper we consider only those walks that start and end at the origin.

For a specific step set, let $q_n$ be the number of walks of length $n$ that start and end at the origin. We associate a generating function
\begin{align}
  G(t) &= \sum_{n \geq 0} q_n t^n.
\end{align}
In order to construct this generating function we need to form associated generating functions which count walks according to the coordinate of their end-point.
To this end define
\begin{align}
  Q(t;x,y)  &= \sum_{n \geq 0} t^n \sum_{k,\ell \geq 0} q_{n,k,\ell} x^k y^\ell \;,
\end{align}
where $q_{n,k,\ell}$ is the number of walks of length $n$ ending at $(k,\ell)$.
The nature of the generating functions $Q(t;0,0)$ and $Q(t;1,1)$ has been the focus of much work in this area.

\begin{figure}
  \centering
	\includegraphics[width=0.35\textwidth]{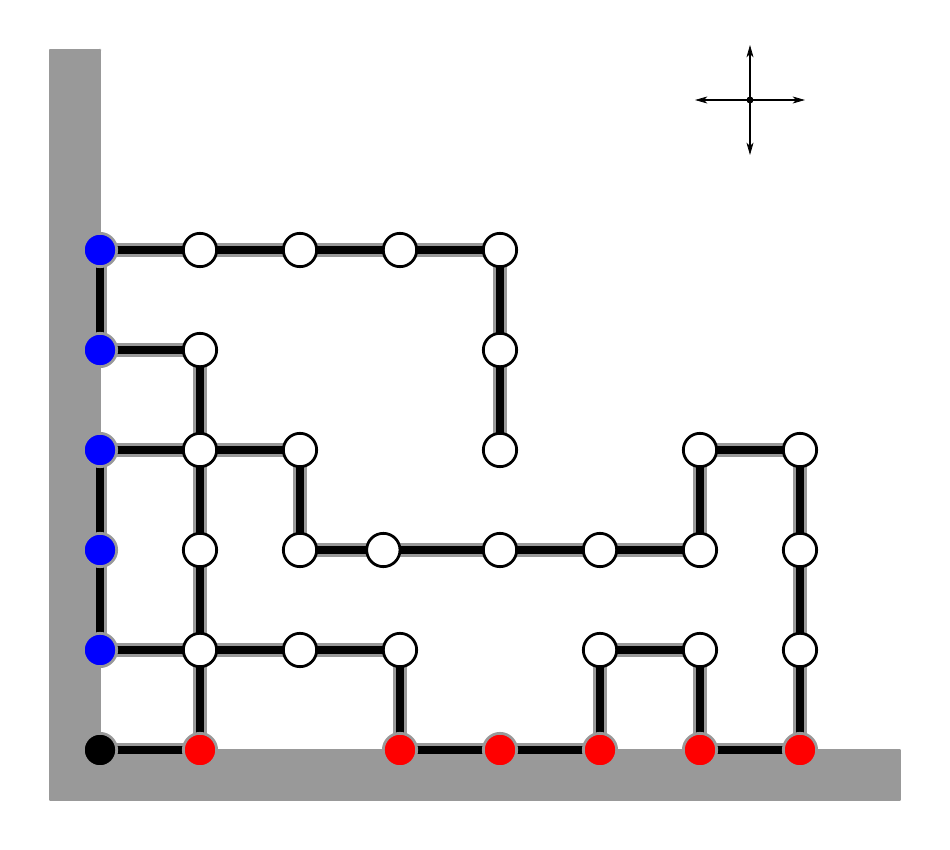}
	\hspace{2cm}
  \includegraphics[width=0.35\textwidth]{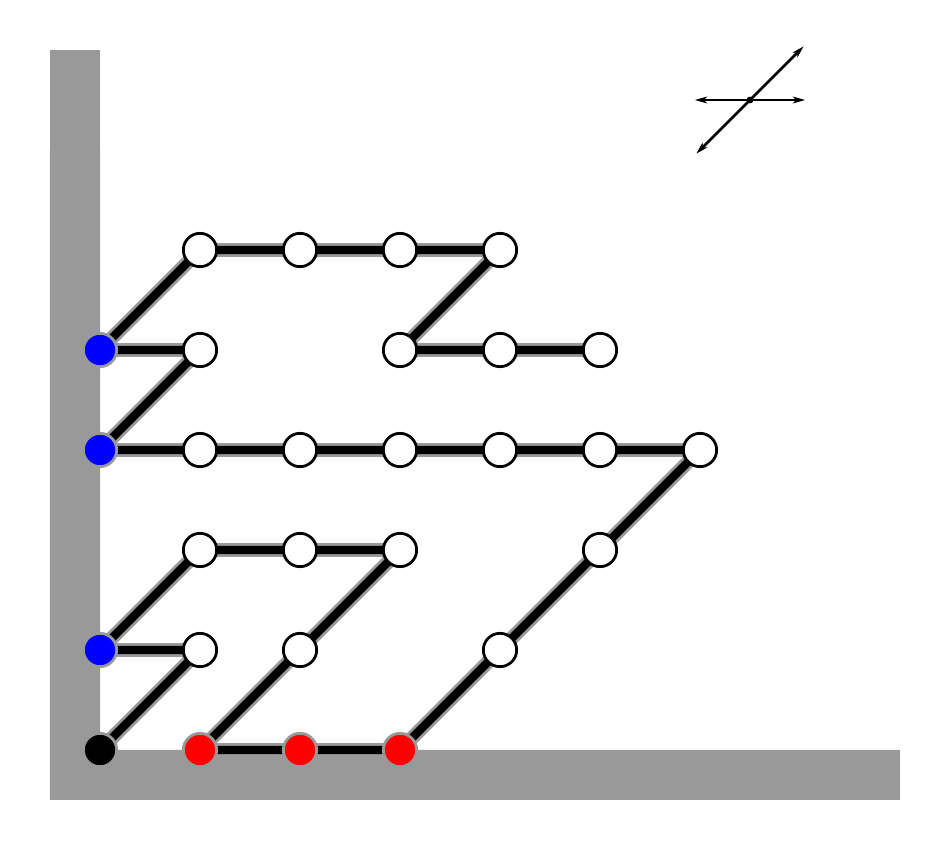}
  \caption{Two examples of random walks in the quarter plane with visits to either boundary wall indicated.
  The walk on the left may take N, S, E and W steps (see Model 1 below), while the walk on the right may take E, W, NE, and SW steps (see Model 23 below).  The main focus of this paper is similar walks constrained to start and end at the origin.}
  \label{fig qp eg}
\end{figure}

We extend these enumeration problems by also counting the number of times walks visit either the horizontal or vertical boundaries.
Let $q_{n,k,\ell,h,v}$ be the number of walks of length $n$ that end at $(k,\ell)$ which visit the horizontal boundary (ie.~the line $y=0$) $h$ times, and visit the vertical boundary (ie.~the line $x=0$) $v$ times.
The associated generating function is
\begin{align}
  Q(t;x,y;a,b) \equiv Q(x,y) &= \sum_{n \geq 0} t^n \sum_{k,\ell,h,v \geq 0} q_{n,k,\ell,h,v} x^k y^\ell a^h b^v \equiv \sum_{n\geq 0} t^n Q_n(x,y).
\end{align}
A visit to the origin gains a weight $ab$ as the origin is considered as part of both the horizontal and vertical boundaries. We will, where it is clear, suppress the variables $t,a$ and $b$. We then redefine
\begin{align}
  G(t;a,b) &= Q(t;0,0;a,b).
\end{align}
We examine four subcases: $G(t;a,1), G(t;1,b), G(t;a,a)$ and $G(t;a,b)$.
These count (respectively) visits only to the horizontal boundary, visits only to the vertical boundary, visits to either boundary (not distinguishing between horizontal and vertical), and all visits to the boundaries (distinguishing between horizontal and vertical).

The nature of these generating functions for different values of $a,b$ is the focus of this paper.

\subsection{Main results}
As noted above we focus on the 23 classes of step sets where the generating function for the non-interacting models is D-finite or algebraic: these are described in the tables below.
We recall that a formal series in $t$ is D-finite \cite{lipshitz1989, stanley2001} if it satisfies a linear differential equation with polynomial coefficients.
Our main results can be summarised as saying that the solubility of 22 of the 23 classes changes with the introduction of boundary weights $a$ and $b$.
We follow the numbering from \cite{bousquet-melou_walks_2010} and then separate the different walk classes according to the size of the symmetry group associated with their step set.

Each of Tables~\ref{tab 1to4}--\ref{tab 2223} contains information on the nature of the solution for various values of the boundary weights.
\begin{itemize}
  \item \textbf{Alg} --- the solution is algebraic (and not rational).
  If a checkmark \cmark\ is present, then this has been proved.
  If a cross \xmark\ is present, then this has been guessed on the basis of analysis of series data using the powerful Ore Algebra package \cite{kauers2015ore} in the Sage computer algebra system \cite{sagemath}.
  \item \textbf{DF} --- the solution is D-finite. Again this has been proved where a \cmark\ is present and guessed where there is an \xmark.
  In all the proven cases the solution is not algebraic. In the guessed cases we know that the solution cannot be algebraic if the solution at $(a,b)=1$ is not algebraic.
  This leaves Models 21 and 23; we have guessed D-finite solutions for some parameter ranges, but we have not been able to find algebraic solutions.
  \item \textbf{DAlg} --- the solution is D-algebraic. That is, the generating function $G(t;a,b)$ and its derivatives (with respect to $t$) satisfy a non-trivial polynomial.
  We use a \cmark\ to denote that the solution can be written as the ratio of two (known) D-finite functions and hence is D-algebraic.
  We have not been able to guess D-algebraic solutions to any of the models.
  \item \textbf{?} --- we have not been able to solve or guess any algebraic, D-finite or D-algebraic solution.
\end{itemize}
In addition we give information about the so called \emph{group of the walk} \cite{fayolle1999random}.
This is the group of symmetries satisfied by the kernel --- see equation~\eqref{eqn kern defn}.
We state the order of the group and the two generating involutions.
We have used the notation $\olx \equiv x^{-1}$ and  $\oly \equiv y^{-1}$.

\subsubsection{Vertically and horizontally symmetric --- $D_2$ groups}
The 4 walk models in Table~\ref{tab 1to4} possess both vertical and horizontal symmetry; the group of symmetries associated with their step set is equivalent to $D_2$.
In all cases we are able to find the generating function for all $(a,b)$ values.

\begin{thm}\label{thm 1234}
  For general values of $a$ and $b$, the generating function $G(t;a,b)$ for Models $1,3$ and $4$ can be written as the ratio of two D-finite functions.
  This simplifies when either $a$ or $b$ is one, and $G(t;a,1)$ and $G(t;1,b)$ are D-finite. On the other hand, for Model $2$, $G(t;a,b)$ is D-finite for all $a,b$.
\end{thm}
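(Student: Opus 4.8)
The plan is to first derive the functional equation for $Q(x,y)=Q(t;x,y;a,b)$ by building walks one step at a time: I charge each appended step the weight $a^{[\text{land on }y=0]}b^{[\text{land on }x=0]}$ of the vertex it lands on, and discard any step leaving the quadrant. Since landing on the $x$-axis multiplies the $y^0$-part of a series by $a$ and landing on the $y$-axis multiplies the $x^0$-part by $b$, this produces an equation of the shape
\begin{equation*}
K(x,y)\,Q(x,y)=ab\,xy+\text{(boundary terms)},
\end{equation*}
where $K(x,y)=xy\bigl(1-tS(x,y)\bigr)$ is the kernel, $S(x,y)=\sum_{(i,j)\in\mathcal S}x^iy^j$ is the step polynomial, and the boundary terms are combinations, with coefficients polynomial in $t,a,b$, of the one-variable unknowns $Q(x,0)$, $Q(0,y)$ and the scalar $G=Q(0,0)$. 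Two features drive the whole argument: the $a,b$ dependence enters only through factors $(a-1)$ and $(b-1)$, so the boundary terms collapse to their classical non-interacting form at $a=b=1$; and $K$ is invariant under the order-$4$ group $\mathcal G=\{1,\xi,\eta,\xi\eta\}$ of the walk, whose generating involutions are of the form $x\mapsto\olx$ and $y\mapsto\oly$ (up to a factor), as listed for each model.

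For Model $2$ I would avoid the kernel method and instead exploit that its step polynomial \emph{factors}, $S(x,y)=\sigma(x)\tau(y)$, so that each step alters the $x$- and $y$-coordinates independently. An excursion then decomposes bijectively into a pair of one-dimensional excursions of the same length, and—because horizontal-boundary visits are recorded by the $y$-walk and vertical-boundary visits by the $x$-walk—the joint weight factors as well. Consequently $G(t;a,b)=A(t;a)\odot B(t;b)$ is the Hadamard product in $t$ of two one-dimensional weighted-excursion generating functions. Each of these is algebraic (its one-dimensional kernel is a quadratic), and since the Hadamard product of D-finite series is again D-finite, $G(t;a,b)$ is D-finite for every $a,b$. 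This settles the Model $2$ claim.

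For Models $1,3,4$ I would run the obstinate kernel method. Applying the elements of $\mathcal G$ to the functional equation and forming the signed orbit sum, invariance of $K$ lets me factor it out as $K\sum_{g}\operatorname{sgn}(g)(gQ)$. At $a=b=1$ the boundary terms cancel in this sum, and a positive-part extraction presents $G$ as the diagonal of a rational function, hence D-finite. For general $a,b$ the terms proportional to $(a-1)$ and to $(b-1)$ no longer cancel; using the product structure of $\mathcal G$ I would clear them one boundary at a time, substituting the small kernel roots $x=X_0(y)$ and $y=Y_0(x)$ (algebraic power series) to obtain scalar relations among $Q(x,0),Q(0,y),G$ and then eliminating the two boundary functions. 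The elimination leaves a single identity $\Delta(t;a,b)\,G=\Phi(t;a,b)$, in which $\Delta$ and $\Phi$ are constant terms and diagonals of algebraic functions assembled from the kernel roots, hence D-finite; thus $G=\Phi/\Delta$ is a ratio of two D-finite functions, and so D-algebraic. When $a=1$ the $(a-1)$-terms vanish, the corresponding elimination step is vacuous, $\Delta$ degenerates to an invertible series, and $G(t;a,1)$ is recovered by a single extraction, hence D-finite; the case $b=1$ is symmetric.

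I expect the main obstacle to be the bookkeeping of exactly which boundary terms survive the orbit sum for general $a,b$, together with proving that the assembled quantities $\Delta,\Phi$ are genuinely \emph{D-finite} and not merely D-algebraic. This amounts to recognising them as diagonals (or constant terms) of algebraic functions, so that the standard closure results apply, and to checking that the kernel roots remain well-defined formal power series once the weights are switched on. A secondary subtlety, needed for the $a=1$ and $b=1$ statements, is to verify that the degeneration of $\Delta$ is to an \emph{invertible} series rather than a mere algebraic simplification, so that the division genuinely disappears and a single D-finite expression remains.
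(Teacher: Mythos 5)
Your Model 2 argument is correct and is essentially the paper's own: the step polynomial factors, an excursion decomposes into an independent pair of weighted Dyck-path excursions whose axis-visits record the two boundary weights, and $G(t;a,b)=f(t;a)\odot_t f(t;b)$ is D-finite because D-finite series are closed under Hadamard product.

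For Models 1, 3 and 4, however, your elimination scheme has a genuine gap at the decisive step. After the half-orbit sum (which removes $Q(0,y)$) and the substitution $y\mapsto Y(t;x)$, one is left with an identity of the shape
\begin{equation*}
0 = F_1(x) + Q(x,0) + c(x)\,Q(\olx,0) - F_2(x)\,Q(0,0),
\end{equation*}
where the two ``boundary unknowns'' are not independent functions but the \emph{same} unknown $Q(\cdot,0)$ evaluated at $x$ and at $\olx$. Substituting kernel roots, as you propose, yields functional relations (series in one variable), not scalar ones, so there is no finite linear system from which ``the two boundary functions'' can be eliminated; your claimed identity $\Delta\,G=\Phi$ does not follow from the steps you describe. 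What actually closes the argument in the paper is a pair of observations you never make: (i) the coefficient $c(x)=\frac{\olx(bx-tb-x)}{tbx-b+1}$ is a \emph{rational} function of $x$, so the constant term $[x^0]\{c(x)Q(\olx,0)\}$ can be computed in closed form and equals, up to an explicit prefactor, the single evaluation $Q\bigl(\tfrac{tb}{b-1},0\bigr)$; and (ii) the point $x=\tfrac{tb}{b-1}$ is precisely the point that annihilates the coefficient of $Q(0,y)$ in the original functional equation, so a second kernel-root substitution there expresses $Q\bigl(\tfrac{tb}{b-1},0\bigr)$ in terms of $Q(0,0)$ alone. Only then does one get a closed linear system, whose solution exhibits $G$ as a ratio built from a D-finite constant term $C(t)$ and an algebraic series $P$. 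This rationality is not a bookkeeping detail: the paper notes that for Models 5--16 the analogous coefficient is algebraic rather than rational, which is exactly why the method fails there, and that for Models 3 and 4 the denominator of $c(x)$ is quadratic in $x$, which is why their treatment is more delicate while still producing a ratio of D-finite functions. Your $b=1$ (and symmetrically $a=1$) degeneration is sound: there $c(x)$ collapses to the monomial $-\olx^{2}$, the offending term contributes only strictly negative powers of $x$, and a single constant-term extraction gives a D-finite expression, which is how the paper proves that part of the theorem.
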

\begin{proof}
  See Sections~\ref{sec soln 134} and~\ref{sec soln 2}.
\end{proof}

\begin{table}[h!]
\begin{center}
\begin{tabular}{|c| m{11mm} |c|c|c|c|c|c|}
  \hline
  & \multicolumn{1}{c|}{Model} & (1,1) & (a,a) & (a,1) & (1,b) & (a,b) & Group \\
  \hline
  1 & \vspace{4pt}\includegraphics[height=1cm]{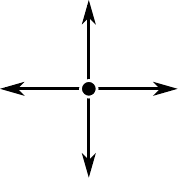} &
  DF  \cmark & DAlg \cmark & DF \cmark & DF \cmark & DAlg \cmark & 4: $(\olx,y), (x,\oly)$\\
  \hline \hline
  2 & \vspace{4pt}\includegraphics[height=1cm]{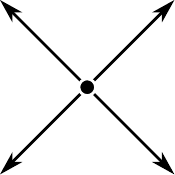} &
  DF  \cmark & DF \cmark & DF \cmark & DF \cmark & DF \cmark & 4: $(\olx,y), (x,\oly)$\\
  \hline \hline
  3 & \vspace{4pt}\includegraphics[height=1cm]{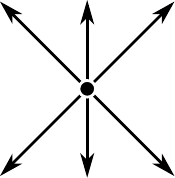} &
  DF  \cmark & DAlg \cmark & DF \cmark & DF \cmark & DAlg \cmark & 4: $(\olx,y), (x,\oly)$\\
  \hline
  4 & \vspace{4pt}\includegraphics[height=1cm]{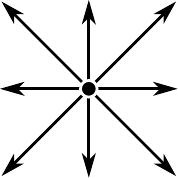} &
  DF  \cmark & DAlg \cmark & DF \cmark & DF \cmark & DAlg \cmark & 4: $(\olx,y), (x,\oly)$\\
  \hline
\end{tabular}
\caption{The step sets associated with these walk models are both vertically and horizontally symmetric. The results are summarised in Theorem~\ref{thm 1234} and the solutions are given in Sections~\ref{sec soln 134} and~\ref{sec soln 2}.}
\label{tab 1to4}
\end{center}
\end{table}

\subsubsection{Horizontally symmetric --- $D_2$ groups}
The 12 walk models in Tables~\ref{tab 5to10} and~\ref{tab 11to16} possess horizontal symmetry --- the step set is symmetric over the line $x=0$.
The models in Table~\ref{tab 5to10} have a net positive vertical drift, while those in Table~\ref{tab 11to16} have a net negative vertical drift.
We were only able to find solutions when $b=1$ --- preserving the symmetry of the model.
Indeed the solution method is essentially the same as for $a=b=1$ and the generating function remains D-finite.

When $b\neq 1$ we were not able to find or guess a solution for any of the models. We discuss this further in Section~\ref{sec soln 5to16}.
\begin{thm}\label{thm 5to16}
  The generating function $G(t;a,1)$ is D-finite for Models 5 to 16.
\end{thm}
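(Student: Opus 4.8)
The plan is to reduce the problem to the ordinary kernel-method computation for the uninteracting model, carrying the single weight $a$ along as a parameter. First I would set up the functional equation for $Q(x,y)=Q(t;x,y;a,1)$ by appending one step at a time and recording a factor $a$ each time a step lands on the line $y=0$. Writing the step inventory as $S(x,y)=\oly D(x)+E(x)+yU(x)$, with $D,E,U$ the down, level and up parts and $L(y)$ the coefficient of $\olx$ in $S$, the reweighting of the row $y=0$ contributes exactly one extra term, and after clearing denominators the equation takes the form
\begin{equation*}
K(x,y)\,Q(x,y)=xy-t\,yL(y)\,Q(0,y)-t\,xD(x)\,Q(x,0)+t\,c_{-1,-1}\,Q(0,0)+(1-\ola)\,xy\,Q(x,0),
\end{equation*}
where $K(x,y)=xy\bigl(1-tS(x,y)\bigr)$ is precisely the kernel of the uninteracting model and $c_{-1,-1}$ is the coefficient of the $(-1,-1)$ step. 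The decisive structural point is that putting $b=1$ leaves $K$ completely free of $a$: the weight survives only in the final term $(1-\ola)\,xy\,Q(x,0)$, which merely rescales the coefficient of the horizontal-boundary section, while the vertical-boundary section $Q(0,y)$ and the corner term carry exactly the coefficients of the case $a=b=1$.

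Next I would run the kernel method. Because the step set is symmetric across $x=0$, the small branch $y=Y(x)$ of the kernel (the root that is a formal power series in $t$ with $Y(0)=0$) satisfies $Y(\olx)=Y(x)$, and likewise $D(\olx)=D(x)$. Substituting $y=Y(x)$ annihilates the left-hand side and yields a relation among $Q(x,0)$, $Q(0,Y)$ and $Q(0,0)$; replacing $x$ by $\olx$ gives a second relation with the \emph{same} $Q(0,Y)$ and $Q(0,0)$ terms, since their coefficients depend only on $Y=Y(x)=Y(\olx)$. Subtracting the two eliminates both unknown $y$-sections simultaneously and leaves
\begin{equation*}
\bigl[(1-\ola)Y-tD(x)\bigr]\bigl(xQ(x,0)-\olx Q(\olx,0)\bigr)=(\olx-x)\,Y.
\end{equation*}
Since $xQ(x,0)$ involves only positive powers of $x$ and $\olx Q(\olx,0)$ only negative powers, extracting the nonnegative part in $x$ isolates $xQ(x,0)$ as the positive part of an explicit algebraic function of $x$ (algebraic because $Y$ is a branch of a kernel quadratic). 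By Lipshitz's theorem the positive part of an algebraic series is D-finite \cite{lipshitz1989}, so $Q(x,0)$ is D-finite, and hence so is its coefficient $G(t;a,1)=Q(t;0,0;a,1)=[x^0]Q(x,0)$.

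The crux of the argument, and the step I expect to give the most trouble, is the cancellation in the subtraction: it works precisely because the only weighted line, $y=0$, is fixed by the reflection $(x,y)\mapsto(\olx,y)$ that generates the order-four group, so the weight $a$ never touches the $Q(0,y)$ and $Q(0,0)$ terms that must drop out. This is exactly why the hypothesis $b=1$ is needed. If $b\neq1$, the symmetric derivation attaches a further term $(1-\olb)\,xy\,Q(0,y)$ whose coefficient $(1-\olb)xY$ is \emph{not} invariant under $x\mapsto\olx$; the vertical section then fails to cancel in the subtraction, and—because these models possess no reflection symmetry across $y=0$ to play the compensating role—the kernel method no longer closes. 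Granting the cancellation, the remainder is the same positive-part extraction already used at $a=b=1$, which is the sense in which the solution is obtained essentially as in the uninteracting case, and it produces a genuinely D-finite (rather than merely D-algebraic) generating function.
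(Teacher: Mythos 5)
Your proposal is correct and follows essentially the same route as the paper's proof: the $b=1$ functional equation, elimination of both $Q(0,y)$ and $Q(0,0)$ by exploiting the reflection $x\mapsto\olx$ together with the kernel root $y=Y(x)$ (you substitute $Y$ first and then reflect, the paper reflects first and then substitutes --- equivalent since $Y(\olx)=Y(x)$ and $A_{-1}(\olx)=A_{-1}(x)$), followed by nonnegative-part/constant-term extraction and Lipshitz's theorem to conclude D-finiteness. The only blemish is a normalization slip: under your stated convention (weights only when a step \emph{lands} on $y=0$) the inhomogeneous term should be $xy/a$ rather than $xy$, so your equations correspond to also weighting the starting vertex --- an overall factor of $a$ that affects nothing in the argument.
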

\begin{proof}
  See Section~\ref{sec soln 5to16}.
\end{proof}

\begin{table}
\begin{center}
\begin{tabular}{|c|m{11mm}|c|c|c|c|c|c|c|c|}
  \hline
  & \multicolumn{1}{c|}{Model} & (1,1) & (a,a) & (a,1) & (1,b) & (a,b) & Group\\
  \hline
  5 & \vspace{4pt}\includegraphics[height=1cm]{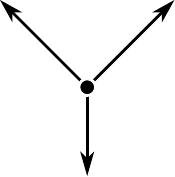} &
  DF  \cmark& ? & DF \cmark & ? & ? & 4: $(\olx,y), \left(x,\frac{1}{x+\olx}\oly\right)$\\
  \hline
  6 & \vspace{4pt}\includegraphics[height=1cm]{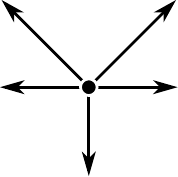} &
  DF  \cmark& ? & DF \cmark & ? & ? & 4: $(\olx,y), \left(x,\frac{1}{x+\olx}\oly\right)$\\
  \hline
  7 & \vspace{4pt}\includegraphics[height=1cm]{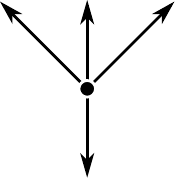} &
  DF  \cmark& ? & DF \cmark & ? & ? & 4: $(\olx,y), \left(x,\frac{1}{x+1+\olx}\oly\right)$\\
  \hline
  8 & \vspace{4pt}\includegraphics[height=1cm]{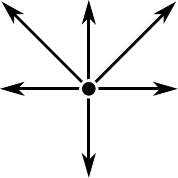} &
  DF  \cmark& ? & DF \cmark & ? & ? & 4: $(\olx,y), \left(x,\frac{1}{x+1+\olx}\oly\right)$\\
  \hline
  9 & \vspace{4pt}\includegraphics[height=1cm]{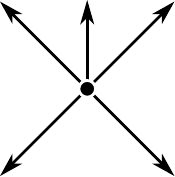} &
  DF  \cmark& ? & DF \cmark & ? & ? & 4: $(\olx,y), \left(x,\frac{x+\olx}{x+1+\olx}\oly\right)$\\
  \hline
  10 & \vspace{4pt}\includegraphics[height=1cm]{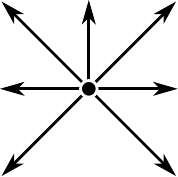} &
  DF  \cmark& ? & DF \cmark & ? & ? & 4: $(\olx,y), \left(x,\frac{x+\olx}{x+1+\olx}\oly\right)$\\
  \hline
\end{tabular}
\end{center}
\caption{Step sets that are symmetric over the line $x=0$ with a net positive vertical drift. We have only found solutions when $b=1$; the results are summarised in Theorem~\ref{thm 5to16} and the solutions are given in Section~\ref{sec soln 5to16}.}
\label{tab 5to10}
\end{table}

\begin{table}
\begin{center}
\begin{tabular}{|c|m{11mm}|c|c|c|c|c|c|c|c|}
  \hline
  & \multicolumn{1}{c|}{Model} & (1,1) & (a,a) & (a,1) & (1,b) & (a,b) & Group\\
  \hline
  11 & \vspace{4pt}\includegraphics[height=1cm]{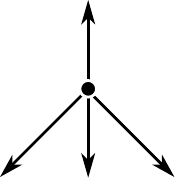} &
  DF  \cmark& ? & DF \cmark & ? & ? & 4: $(\olx,y), \left(x,(x+1+\olx)\oly\right)$\\
  \hline
  12 & \vspace{4pt}\includegraphics[height=1cm]{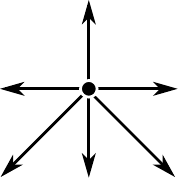} &
  DF  \cmark& ? & DF \cmark & ? & ? & 4: $(\olx,y), \left(x,(x+1+\olx)\oly\right)$\\
  \hline
  13 & \vspace{4pt}\includegraphics[height=1cm]{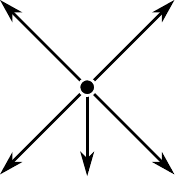} &
  DF  \cmark& ? & DF \cmark & ? & ? & 4: $(\olx,y), \left(x,\frac{x+1+\olx}{x+\olx}\oly\right)$\\
  \hline
  14 & \vspace{4pt}\includegraphics[height=1cm]{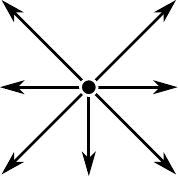} &
  DF  \cmark& ? & DF \cmark & ? & ? & 4: $(\olx,y), \left(x,\frac{x+1+\olx}{x+\olx}\oly\right)$\\
  \hline
  15 & \vspace{4pt}\includegraphics[height=1cm]{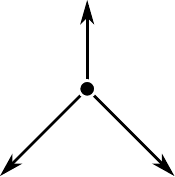} &
  DF  \cmark& ? & DF \cmark & ? & ? & 4: $(\olx,y), \left(x,(x+\olx)\oly\right)$\\
  \hline
  16 & \vspace{4pt}\includegraphics[height=1cm]{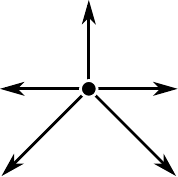} &
  DF  \cmark& ? & DF \cmark & ? & ? & 4: $(\olx,y), \left(x,(x+\olx)\oly\right)$\\
  \hline
\end{tabular}
\end{center}
\caption{Step sets that are symmetric over the line $x=0$ with a net negative vertical drift. We have only found solutions when $b=1$; the results are summarised in Theorem~\ref{thm 5to16} and the solutions are given in Section~\ref{sec soln 5to16}.}
\label{tab 11to16}
\end{table}

\subsubsection{$D_3$ groups}
Models 17 to 21 have diagonal symmetries rather than vertical or horizontal ones, and the corresponding group is isomorphic to $D_3$.
The solutions to these models are varied.
\begin{thm}\label{thm 1718}
  For general values of $a$ and $b$, the generating function $G(t;a,b)$ for Model $17$ can be written as the ratio of two D-finite functions.
  This simplifies when either $a$ or $b$ is one, and $G(t;a,1)$ and $G(t;1,b)$ are D-finite.
  On the other hand, for Model $18$, $G(t;a,a)$ can be written as the ratio of two D-finite functions.
\end{thm}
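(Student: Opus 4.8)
The plan is to treat both models by the kernel method, following the same programme that establishes Theorem~\ref{thm 1234} for the $D_2$-symmetric Models $1,3,4$, but now with the order-$6$ group. First I would set up the functional equation for $Q(x,y)$. Adding one step at a time and recording the weight $a$ gained at each visit to the wall $y=0$ and $b$ at each visit to $x=0$ (with $ab$ at the origin), one is led to an equation of the form
\begin{equation}
  K(x,y)\,Q(x,y) = ab\,xy + xy\cdot L\bigl(Q(x,0),\,Q(0,y),\,G\bigr),
\end{equation}
where $K(x,y)=xy\bigl(1-tS(x,y)\bigr)$ is the kernel built from the step polynomial $S(x,y)$, and $L$ is linear in the boundary unknowns, the weights $a,b$ appearing as coefficients. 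The symmetries of $K$ generate the group of the walk, here isomorphic to $D_3$; since Models $17$ and $18$ are diagonally symmetric, the reflection $(x,y)\mapsto(y,x)$ is a further symmetry of $K$ that simultaneously exchanges the two walls and hence swaps $a\leftrightarrow b$.

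Next I would exploit the kernel curve. The equation $K(x,y)=0$ defines algebraic branches $y=Y(x)$ and $x=X(y)$; substituting an admissible branch annihilates the left-hand side and produces a linear relation among $Q(x,0)$, $Q(0,y)$ and $G$ with coefficients that are algebraic in $x$ (or $y$) and carry the weights $a,b$. In the non-interacting case $a=b=1$ the full alternating orbit sum over the six group elements cancels all the boundary unknowns and exhibits $G$ as a constant-term extraction of an explicit rational function, which is D-finite by the closure properties of D-finite series. The obstruction for $a,b\neq1$ is that the factors $a$ and $b$ break this cancellation: under the group action the weighted boundary terms no longer align, so the orbit sum does not telescope.

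For Model $17$ with general $(a,b)$ I expect the two substitutions (one killing the $y$-dependence, one the $x$-dependence) to yield two independent linear relations which, together, can be solved for $G(t;a,b)$. The solution emerges as a quotient whose numerator and denominator are obtained by extracting constant terms (equivalently, diagonals) of algebraic functions on the kernel curve; such extractions are D-finite by the closure properties of D-finite series, so $G(t;a,b)$ is a ratio of two D-finite functions, exactly as in Theorem~\ref{thm 1234}. When $a=1$ (respectively $b=1$) one wall reverts to the unweighted case, an extra symmetry of the weighted equation is restored, and the $2\times2$ system degenerates to a single extraction; the quotient then reduces to one D-finite series, giving $G(t;a,1)$ and $G(t;1,b)$ D-finite.

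For Model $18$ the same manipulation only closes up on the diagonal $a=b$. Here I would use the reflection $(x,y)\mapsto(y,x)$ as an honest symmetry of the \emph{weighted} functional equation, which is legitimate precisely when $a=b$ so that the induced swap $a\leftrightarrow b$ is trivial; this identifies $Q(x,0)$ with the reflection of $Q(0,y)$ and halves the number of boundary unknowns, after which $G(t;a,a)$ can be isolated, again as a ratio of two D-finite functions. I expect the main obstacle throughout to be the bookkeeping of exactly which symmetries of the kernel survive the introduction of the weights, and then checking that the resulting linear system for the boundary series is non-degenerate, so that $G$ is genuinely isolable as a ratio of D-finite functions; the fact that this works for all $(a,b)$ in Model $17$ but only for $a=b$ in Model $18$ will come down to the differing ways the two step sets meet the boundary.
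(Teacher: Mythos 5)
Your proposal reproduces the correct general frame (weighted functional equation, kernel, group of the walk, constant-term extraction, D-finite closure properties), but it fails on the two points that constitute the actual proof. First, a factual error: Model 17 is \emph{not} symmetric under $(x,y)\mapsto(y,x)$. Its step set $\{(0,1),(-1,0),(1,-1)\}$ has only the diagonal symmetry across $y=-x$, and its group is generated by $(x,y)\mapsto(\olx y,y)$ and $(x,y)\mapsto(x,x\oly)$; the swap of $x$ and $y$ is a symmetry of Model 18 alone. Since your explanation of why Model 17 is solvable for all $(a,b)$ while Model 18 needs $a=b$ rests on this swap, it is backwards: it is exactly because Model 17's solution never invokes the $y=x$ reflection that unequal weights do no harm there, whereas the solution of Model 18 begins from $Q(x,y)=Q(y,x)$, which is legitimate only when $a=b$ (this part of your proposal does match the paper).

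Second, and more seriously, your mechanism for isolating $G$ does not exist as described. Substituting the branch $y=Y(x)$ leaves one relation among the \emph{functions} $Q(x,0)$, $Q(0,Y(x))$ and $Q(0,0)$, and substituting $x=X(y)$ leaves a relation among $Q(X(y),0)$, $Q(0,y)$ and $Q(0,0)$; these unknowns are evaluated at different arguments, so there is no $2\times 2$ linear system to solve --- that is the central difficulty of the whole subject. What the paper actually does for Model 17 is: (i) take a half-orbit sum over the three group elements $(x,y)$, $(\olx y,y)$, $(\olx y,\olx)$ to cancel $Q(0,y)$ and $Q(\olx y,0)$, leaving an equation in $Q(x,0)$, $Q(0,\olx)$ and $Q(0,0)$; (ii) verify the key fact that the coefficient of $Q(0,\olx)$ is a \emph{rational} function of $t,x,a,b$ (this is precisely what fails for Models 5--16 and blocks their solution when the symmetry-breaking weight is present); (iii) extract $[x^0]$ via a partial-fraction decomposition in $x$, which produces a \emph{new} unknown $Q(0,1/\xi_+)$ evaluated at an algebraic series; and (iv) eliminate this new unknown by a second, independent use of the original functional equation: the substitution $y=axt/(a-1)$ kills the $Q(x,0)$ term, then $x=\xi_-$ kills the kernel, and the identity $a\xi_- t/(a-1)=1/\xi_+$ turns the result into an expression for $Q(0,1/\xi_+)$ in terms of $Q(0,0)$ alone. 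Only then does one obtain $Q(0,0)=P/\bigl(1+(a-1)(b-1)P\bigr)$ with $P$ D-finite. The same steps (ii)--(iv), applied to $L(x)=Q(x,0)=Q(0,x)$, are what close the argument for Model 18 at $a=b$. None of (ii)--(iv) appears in your proposal, and without them the argument cannot be completed. A smaller point: the $a=1$ case is not a matter of a ``restored symmetry''; at $a=1$ the coefficient of $Q(0,\olx)$ collapses to $(tb-b+1)\olx^2$, whose product with $Q(0,\olx)$ contains only negative powers of $x$, so that unknown simply drops out of the constant-term extraction and $G(t;1,b)$ appears directly as a single D-finite constant term.
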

\begin{proof}
  See Section~\ref{sec soln 17} for a solution of Model 17, and Section~\ref{sec soln 18} for a solution of Model 18.
\end{proof}
Model~18 is very similar to the model considered in \cite{tabbara2016}; the main difference being the presence of a weighted neutral step.
The solution we give in Section~\ref{sec soln 18} is essentially unchanged from that in \cite{tabbara2016}.

Notice that for Model~18, we have been able to guess D-finite solutions when either $a$ or $b$ is one, but have not been able to prove these rigorously.
Additionally, for general $a,b$ we have not been able to find or guess any solution.
We note that Model 17 has a single diagonal symmetry along the line $y=-x$ which is not that of the quarter plane itself which is symmetric on the line $y=x$.
On the other hand Model 18 has two diagonal symmetries in the line $y=-x$ and the quarter plane symmetry of $y=x$.
How this observation might be related to our difficulty in proving anything about Model 18 when $a\neq b$ which breaks the $y=x$ symmetry is an intriguing question.

\begin{table}[h!]
\begin{center}
\begin{tabular}{|c|m{11mm}|c|c|c|c|c|c|c|c|}
  \hline
  & \multicolumn{1}{c|}{Model} & (1,1) & (a,a) & (a,1) & (1,b) & (a,b) & Group\\
  \hline
  17 & \vspace{4pt}\includegraphics[height=1cm]{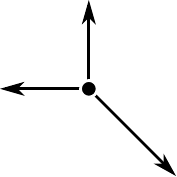} &
  DF  \cmark& DAlg \cmark & DF \cmark & DF \cmark & DAlg \cmark & 6: $(\olx y,y), (x,x\oly)$\\
  \hline
  18 & \vspace{4pt}\includegraphics[height=1cm]{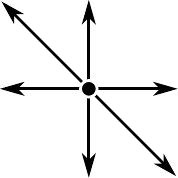} &
  DF  \cmark& DAlg \cmark & DF \xmark & DF \xmark & ? & 6: $(\olx y,y), (x, x\oly)$\\
  \hline
\end{tabular}
\end{center}
\caption{Step sets associated with the group generated by the involutions given in the final column that generate a group isomorphic to $D_3$.}
\label{tab 1718}
\end{table}

Models 19, 20 and 21 are the Kreweras, reverse Kreweras and double Kreweras models
\cite{bousquet2005, bousquet-melou_walks_2010, kreweras1965, mishna2009}, and have algebraic generating functions when $a=b=1$.
The only other model that does so is Model 23, Gessel walks \cite{bernadicounting, gessel1986, kauers2009}. We note Models 19 and 20 have a single diagonal symmetry $y=x$; the same as the quarter plane. Once again we can solve these models when that symmetry is preserved by setting $a=b$ in the boundary weights.
\begin{thm}\label{thm 192021}
  For Models 19 and 20, the generating function $G(t;a,a)$ is algebraic.
\end{thm}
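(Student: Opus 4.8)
The plan is first to reduce Model~20 to Model~19. Reversing a walk and negating every step sends the Kreweras steps $\{\mathrm{NE},\mathrm{W},\mathrm{S}\}$ to $\{\mathrm{SW},\mathrm{E},\mathrm{N}\}$, the reverse Kreweras step set, and carries an excursion (a walk from the origin to the origin) to an excursion through the same multiset of sites in the opposite order. Length, endpoints, and the numbers of horizontal and vertical contacts are all preserved, so this is a weight-preserving bijection and $G(t;a,b)$ is identical for Models~19 and~20. It therefore suffices to treat Model~19.

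For Model~19 I would derive the functional equation for $Q(x,y)=Q(t;x,y;a,b)$ by appending a final step and weighting each newly created boundary site by $a$ (horizontal), $b$ (vertical), or $ab$ (the origin). With $H(x)=Q(x,0)$, $V(y)=Q(0,y)$ and $C=Q(0,0)$, clearing denominators gives
\[
  \hat K(x,y)\,Q(x,y) = ab\,xy - tx\,H(x) - ty\,V(y) + (a-1)(\cdots) + (b-1)(\cdots),
\]
with kernel $\hat K(x,y)=xy - t(x^2y^2+x+y)$. The essential new feature, absent when $a=b=1$, is that the terms carrying $(a-1)$ and $(b-1)$ involve not only the wall series $H,V$ but also the sections $[x^1]Q(x,y)$ and $[y^1]Q(x,y)$: a $\mathrm{W}$ step reaches the vertical wall from the line $x=1$, and an $\mathrm{S}$ step reaches the horizontal wall from $y=1$, so the weight of landing on a wall depends on these adjacent-line generating functions. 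Setting $a=b$ restores the diagonal symmetry of the step set, giving $Q(x,y)=Q(y,x)$; this forces $H=V$, identifies the two sections, and so halves the number of unknown one-variable series.

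I would then run the algebraic kernel method using the order-$6$ group of the walk, generated by $(x,y)\mapsto(\olx\,\oly,\,y)$ and $(x,y)\mapsto(x,\,\olx\,\oly)$. Substituting the small root $Y_0(x)$ of $\hat K=0$ and letting the group act on the kernel curve yields a family of relations among $H$, the common section, and the constants $C$ and $D:=[x^1]H(x)$. Since the alternating orbit sum for Kreweras vanishes identically, $Q(0,0)$ cannot be read off directly; as in the classical solution of the Kreweras excursion problem one must instead extract the relevant sections through a half-orbit or positive-part argument. The new labour is to carry the $(a-1)$ terms and the extra section through this extraction and to close the resulting linear system.

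I expect the elimination of the section $[x^1]Q$ to be the main obstacle: it is precisely the quantity whose appearance defeats the kernel method for the models that remain unsolved, and it is only the restored $x\leftrightarrow y$ symmetry at $a=b$ that renders it tractable here. Once the system is solved, I would exhibit $G(t;a,a)=C$ as a component built from $Y_0(x)$ and the rational data of the kernel, hence a root of a polynomial over $\mathbb{Q}(t,a)$; recovering the known algebraic Kreweras excursion series at $a=1$ provides a consistency check, and one may independently guess and verify the minimal polynomial of $C$ from series data.
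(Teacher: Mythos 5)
Your framing matches the paper: the reversal bijection showing $G(t;a,b)$ coincides for Models 19 and 20, and the decision to work at $a=b$ so that $Q(x,0)=Q(0,x)=:L(x)$ and then follow Bousquet-M\'elou's algebraic kernel method over the order-$6$ group, are exactly the paper's route. But two genuine gaps remain. The first is in your functional equation. You keep the adjacent-line sections $[x^1]\{Q(x,y)\}$ and $[y^1]\{Q(x,y)\}$ (and $[x^1]\{Q(x,0)\}$) as independent unknowns, and you assign to the $a=b$ symmetry the job of taming them. In fact these sections can be eliminated for \emph{every} model and \emph{all} $(a,b)$: the relations recording that a walk ending on a wall was previously on that wall or on the adjacent line, together with the corner relation, express each section in terms of $Q(x,0)$, $Q(0,y)$ and $Q(0,0)$. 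This is the content of the paper's general functional equation, which for Kreweras at $a=b$ reads
\[
a^2xyK(x,y)\,Q(x,y) = xy + ax\left(ay-y-at\right)L(x) + ay\left(ax-x-at\right)L(y) - (a-1)^2xy\,L(0),
\]
with $K=1-t(\olx+\oly+xy)$ and no section terms at all. Without this elimination, your orbit sum would have to remove two unknown univariate series (the wall series and the common section) rather than one, and you give no argument that the group of order $6$ supplies enough relations to do so; ``close the resulting linear system'' is precisely the step that needs justification. (Relatedly, your diagnosis that surviving sections are what defeats the kernel method in the unsolved models is not the paper's: for Models 5--16 the stated obstruction is that a key coefficient, such as that of $Q(\olx,0)$, ceases to be rational, so the required constant term cannot be computed by a substitution.)

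The second, more serious gap is that the mechanism which actually proves algebraicity never appears. After the half-orbit sum, division by the kernel, and extraction of $[y^0]$ --- which introduces the diagonal of $Q$ and the term $[y^0]\{1/K\}=1/\sqrt{\Delta}$, where $\Delta$ is the discriminant of the kernel as a quadratic in $y$ --- the paper separates nonnegative from negative powers of $x$ using the canonical factorization $\Delta=\Delta_0\Delta_+\Delta_-$ into a constant part and parts in $x$ and $\olx$. This produces a single equation relating $L(x)$ and $L(0)$ whose coefficient of $L(x)$ is the quadratic polynomial $a^2t^2x^2+a^2t-at-ax+x$; the decisive move is a \emph{second}, classical kernel step, substituting its power-series root $x=\xi_-=at+O(t^4)$, which annihilates $L(x)$ and leaves a closed equation expressing $(1-a)L(0)$ in terms of $\xi_-$, $\Delta_0$ and $\Delta_+(\xi_-)$, all algebraic in $t$ and $a$; this is how one shows $L(0)=G(t;a,a)$ is algebraic (it satisfies a degree-$6$ polynomial). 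Your proposal stops short of this: without the final kernel step (or an equivalent device) you are left with one equation in two unknowns, and your fallback of guessing and verifying a minimal polynomial from series data is evidence, not proof.
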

\begin{proof}
Since Models 19 and 20 are reverses of each other, any walk in Model 19 that starts and ends at the origin, is also a walk in Model 20 simply traversed backwards.
Hence $G(t;a,b)$ is identical in the two models. See Section~\ref{sec soln 19} for a derivation of $G(t;a,a)$.
\end{proof}
Unfortunately, we have not been been able to solve either Models 19 or 20 when the $x=y$ symmetry is broken.
Despite this we have observed (again guessing via the Ore Algebra package) that $G(t;a,b)$ is algebraic for general $(a,b)$; $G(t;a,1)$ and $G(t;1,b)$ appear to satisfy degree 6 equations, while $G(t;a,b)$ appears to satisfy a degree 12 equation.

For Model 21, which has the two diagonal symmetries, we have guessed D-finite solutions when either $a$ or $b$ is one, but we have not found algebraic solutions. We have been unable to find any solution when $a=b$ or at general $a,b$.

\begin{table}[h!]
\begin{center}
\begin{tabular}{|c|m{11mm}|c|c|c|c|c|c|c|c|}
  \hline
  & \multicolumn{1}{c|}{Model} & (1,1) & (a,a) & (a,1) & (1,b) & (a,b) & Group\\
  \hline
  19 & \vspace{4pt}\includegraphics[height=1cm]{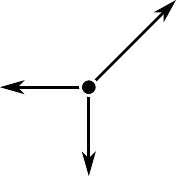} &
  Alg  \cmark& Alg \cmark & Alg \xmark & Alg \xmark & Alg \xmark & 6: $(\olx \oly,y), (x,\olx \oly)$\\
  \hline
  20 & \vspace{4pt}\includegraphics[height=1cm]{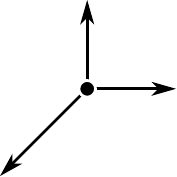} &
  Alg \cmark& Alg \cmark & Alg \xmark & Alg \xmark & Alg \xmark & 6: $(\olx \oly,y), (x,\olx \oly)$\\
  \hline
  21 & \vspace{4pt}\includegraphics[height=1cm]{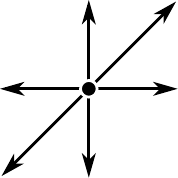} &
  Alg  \cmark& ? & DF \xmark & DF \xmark & ? & 6: $(\olx \oly,y), (x,\olx \oly)$\\
  \hline
\end{tabular}
\end{center}
\caption{Step sets associated with the group generated by the involutions given in the final column that generate a group isomorphic to $D_3$.}
\label{tab 19to21}
\end{table}

\subsubsection{$D_4$ groups}
The final two step sets are neither symmetric under a single horizontal, vertical or either diagonal reflection. However, they are symmetric when double reflections are considered and the group of the walk is isomorphic to $D_4$. Nevertheless the integrability properties of their generating function differ. For Model 22 we can solve for the generating function for all boundary conditions.
\begin{thm}\label{thm 22}
 For Model 22 the generating function $G(t;a,b)$ is a ratio of D-finite functions which simplifies to a D-finite function if  $a=1$ or $b=1$.
\end{thm}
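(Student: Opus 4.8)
The plan is to treat Model 22 by the same obstinate kernel method that underlies Theorems~\ref{thm 1234} and~\ref{thm 1718}, adapted to the order-8 $D_4$ group recorded for this model. The first observation is that the boundary weights $a$ and $b$ reweight only the sites a walk occupies on the two axes; they do not alter the admissible bulk steps. Consequently the kernel $K(x,y)=1-tS(x,y)$ (with $S$ the step inventory of Model 22) is unchanged from the non-interacting case, and the full $D_4$ group of symmetries of $K$ remains available. I would first set up the functional equation by building walks one step at a time, applying the positive-part (quarter-plane) projection, and attaching the extra factors $a$, $b$, $ab$ to the pieces that land on $y=0$, $x=0$, and the origin respectively. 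Because every step has horizontal and vertical displacement in $\{-1,0,1\}$, the projection produces only terms in the one-variable boundary series $Q(x,0)$ and $Q(0,y)$ together with the corner series $G=Q(0,0)$. This yields a kernel equation of the form
\begin{align}
K(x,y)\,Q(x,y) = C(a,b) + L\big(Q(x,0),Q(0,y),G;\,t,x,\olx,y,\oly,a,b\big),
\end{align}
where $C$ records the initial term and $L$ is an explicit linear combination whose coefficients are Laurent polynomials carrying the weights $a$ and $b$.

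Next I would exploit the group. Writing the two generating involutions from the Model~22 row of the table, each fixes $K$ while acting non-trivially on the boundary data, so applying the eight elements of the orbit to the functional equation produces a system of eight relations. The essential structural point --- and what separates Model~22 from the Gessel case (Model~23) in the adjacent row --- is that the signed (alternating) sum over this orbit does not vanish identically. I would form that signed orbit sum: on the left the contributions organise so that the series of interest is isolated (only the identity element contributes a term supported on nonnegative powers of both $x$ and $y$), while on the right the one-variable boundary series combine. At $a=b=1$ this combination telescopes completely and reproduces the diagonal expression for the non-interacting generating function; the work of the theorem is to track precisely what survives once $a,b\neq 1$.

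For general $a$ and $b$ the weights obstruct the exact cancellation that occurs at $a=b=1$, and solving the resulting relation expresses the corner series as a quotient $G(t;a,b)=N(t;a,b)/D(t;a,b)$, where $N$ and $D$ are each obtained by extracting a constant term (equivalently a positive part) of an algebraic function of $x$ and $y$ restricted to the kernel curve $K=0$. Such constant-term and diagonal extractions of algebraic functions are D-finite in $t$, so $G$ is displayed as a ratio of two D-finite series, giving the first assertion. When $a=1$ (respectively $b=1$) one of the two families of correction terms vanishes, the symmetry broken along that single axis is restored, and the obstruction disappears: the combination now telescopes and returns $G(t;1,b)$ (respectively $G(t;a,1)$) as a single positive-part extraction, hence as one D-finite function rather than a ratio.

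The main obstacle I anticipate is controlling the incomplete cancellation at general $a,b$: one must verify that the signed orbit sum really does collapse the two bivariate boundary unknowns $Q(x,0)$ and $Q(0,y)$ down to the single corner series $G$, rather than leaving residual full boundary functions, and then confirm that the coefficient series $N$ and $D$ produced this way are genuinely D-finite via their representation as extractions from algebraic functions. A secondary technical point is the legitimacy of the substitutions: the group elements act by rational maps, and some steps require setting $y$ equal to the small root $Y(x)$ of $K(x,y)=0$, so one must check that $Y(x)$ is a well-defined formal power series in $t$ and that every substituted series lies in the appropriate ring. Once these two points are handled, the identification of $N$ and $D$ as algebraic-function extractions, together with the collapse to a single D-finite function at $a=1$ or $b=1$, should be routine.
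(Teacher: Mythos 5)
Your route is not the paper's: Theorem~\ref{thm 22} is proved there in one line, by observing that Model 22 is isomorphic to the two friendly interacting directed walks near a sticky wall solved in \cite{tabbara2013} (the two quarter-plane axes correspond to wall visits and shared contacts of the walk pair), and importing that solution; the computation underlying \cite{tabbara2013} is an obstinate \emph{half}-orbit-sum argument of the same flavour as Section~\ref{sssec:nsew_general_ab_via_a1b1}, and indeed the paper remarks that the Model 22 answer has the same $P$-and-$R$ structure. A direct kernel-method proof would be a legitimate alternative, but your version has a genuine gap at its central step.

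The gap is your assertion that the signed sum over the full $D_4$ orbit collapses the boundary unknowns $Q(x,0)$ and $Q(0,y)$ down to the corner series $G$, which you defer as a verification that "should be routine." It fails. In the weighted functional equation \eqref{eqn:simplified_general_fe} the coefficient of $Q(x,0)$ is $x\left(y-\frac{y}{a}-tA_{-1}(x)\right)$. The full-orbit cancellation at $a=b=1$ (Proposition 8 of \cite{bousquet-melou_walks_2010}) works precisely because that coefficient is then independent of $y$: for any group element $g$, the contributions of $g$ and $\psi\circ g$, where $\psi$ is the $y$-involution $(x,y)\mapsto(x,x^2\oly)$ fixing $x$, are equal with opposite signs, so all $Q(\cdot,0)$ terms cancel in pairs. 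Once $a\neq 1$, the extra summand $xy\left(1-\frac1a\right)$ depends on $y$, the pairwise cancellation breaks, and the alternating sum retains residual unknowns proportional to $\left(1-\frac1a\right)Q(x',0)$ for the various first coordinates $x'$ in the orbit (and, symmetrically, $\left(1-\frac1b\right)Q(0,y')$ terms): full one-variable generating functions, not just $G$. This is exactly the obstruction the paper reports in Section~\ref{sec:discussion} when it tried the full orbit sum on weighted models. The same objection undercuts your claim that at $a=1$ the sum telescopes --- with $b\neq1$ the $Q(0,y)$ residuals still survive. Note also that your sketch supplies no mechanism for the denominator of the claimed ratio $N/D$; in every solved case in the paper the denominator arises because $Q(0,0)$ also multiplies the known term through the factor $1-(a-1)(b-1)Q(0,0)$. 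The workable repair is the paper's pattern: sum over a proper subset of the group to eliminate only $Q(0,y)$, substitute the power-series kernel root $Y(x)$, and use the rationality of the surviving coefficient to evaluate the constant term in closed form, as in Sections~\ref{sec soln 134} and~\ref{sec soln 17} and in \cite{tabbara2013}.
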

\begin{proof}
Model 22 is isomorphic to the problem studied in \cite{tabbara2013}.
\end{proof}
\begin{table}[h!]
\begin{center}
\begin{tabular}{|c|m{11mm}|c|c|c|c|c|c|c|c|}
  \hline
  & \multicolumn{1}{c|}{Model} & (1,1) & (a,a) & (a,1) & (1,b) & (a,b) & Group\\
  \hline
  22 & \vspace{4pt}\includegraphics[height=1cm]{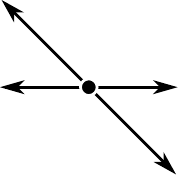} &
  DF  \cmark& DAlg \cmark & DF \cmark & DF \cmark & DAlg \cmark & 8: $(\olx y,y), (x,x^2\oly)$\\
  \hline
  23 & \vspace{4pt}\includegraphics[height=1cm]{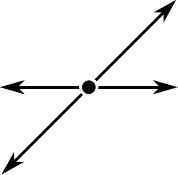} &
  Alg  \cmark& ? & Alg \xmark & DF \xmark & ? & 8: $(\olx \oly,y), (x, \olx^2\oly)$\\
  \hline
\end{tabular}
\end{center}
\caption{Step sets associated with the group generated by the involutions given in the final column that generate a group isomorphic to $D_4$.}
\label{tab 2223}
\end{table}
Unfortunately for Model 23 we could not derive a solution for any boundary values other than the original case of $a=b=1$.
However, our series analysis leads us to suggest that the solution stays algebraic if $b=1$ but may be D-finite when $a=1$.

The organisation of the remainder of this paper is as follows. In Section~\ref{sec:general_fe} we lay out the general functional equation which will be the starting point for all 23 models. In Sections~\ref{sec soln 134} and~\ref{sec soln 2} we solve Models 1--4. All horizontally symmetric models (5--16) are then considered together in Section~\ref{sec soln 5to16}. Models 17--19 are the focus of Sections~\ref{sec soln 17}--\ref{sec soln 19}, respectively. A final discussion and some open questions are presented in Section~\ref{sec:discussion}.

\section{The general functional equation}\label{sec:general_fe}
For a given model, let $\mathcal S \subseteq \{-1,0,1\}^2\setminus\{(0,0)\}$ be the set of allowable steps. The \emph{step set generator} is
\begin{equation}
S(x,y) = \sum_{(i,j)\in\mathcal S} x^i y^j.
\end{equation}
We write, following the notation from \cite{bousquet-melou_walks_2010}, that
\begin{equation}
S(x,y) = A_{-1}(x)\oly + A_0(x) + A_1(x)y = B_{-1}(y)\olx + B_0(y) + B_1(y)x.
\end{equation}

In the absence of any boundary weights, a functional equation for $Q(t;x,y;a,b) \equiv Q(x,y)$ can be obtained by considering how one obtains all walks by the addition of a single step onto a walk of any length.
Away from the boundaries adding a step gives  $tS(x,y)Q(x,y)$ and the zero length walk is counted by a $1$.
One cannot step south across the horizontal boundary so one needs to remove $t\oly A_{-1}(x) Q(x,0)$.
Similarly one cannot step west across the vertical boundary so one needs to subtract $t\olx B_{-1}(y)Q(0,y)$.
Finally note that one has subtracted twice the contribution of walks stepping south-east from the origin and so we must add them back once as $t\olx\oly\epsilon Q(0,0)$, where $\epsilon = [x^{-1}y^{-1}]\{S(x,y)\}$ and so is 1 if $(-1,-1)\in\mathcal S$ and 0 otherwise.
So we obtain
\begin{equation}\label{eqn:general_fe}
Q(x,y) = 1 + tS(x,y)Q(x,y) - t\oly A_{-1}(x) Q(x,0) - t\olx B_{-1}(y)Q(0,y) + t\olx\oly\epsilon Q(0,0).
\end{equation}
This is often written as
\begin{equation}
xyK(x,y)Q(x,y) = xy - txA_{-1}(x)Q(x,0) - tyB_{-1}(y)Q(0,y) + t\epsilon Q(0,0),
\end{equation}
where $K(x,y)$ is the \emph{kernel} and is given by
\begin{align} \label{eqn kern defn}
  K(x,y) &= 1-tS(x,y).
\end{align}

With boundary weights, things become more complicated. We must account for the following:
\begin{enumerate}[(1)]
\item when a walk steps along the $x$-axis, but not to $(0,0)$, it accrues weight $a$;
\item when a walk steps south onto the $x$-axis, but not to $(0,0)$, it accrues weight $a$;
\item when a walk steps along the $y$-axis, but not to $(0,0)$, it accrues weight $b$;
\item when a walk steps west onto the $y$-axis, but not to $(0,0)$, it accrues weight $b$; and
\item when a walk steps onto the vertex at $(0,0)$, it accrues weight $ab$.
\end{enumerate}

We define a more general indicator function: $\epsilon_{ij}$ is 1 if $(i,j) \in \mathcal S$, and is 0 otherwise, where $+$ stands for $1$ and $-$ stands for $-1$.
We address the five points above in turn.

\begin{enumerate}[(1)]
\item To~\eqref{eqn:general_fe}, we add
\begin{equation}
t(a-1)\epsilon_{+0}xQ(x,0) + t(a-1)\epsilon_{-0}\olx\left(Q(x,0) - Q(0,0)\right) - t(a-1)\epsilon_{-0}[x^1]\left\{Q(x,0)\right\}.
\end{equation}
We note that there is a factor of $(a-1)$ rather than just $a$ since all walks have already been enumerated but without the new weight.
Hence we subtract off the incorrectly weighted walks and then add them back with the correct new weight.
The first term accounts for walks moving east along the horizontal boundary, while the second term accounts for walks moving west along the horizontal boundary.
It is more complicated because of the conditions around the origin. The next four points follow by similar arguments.

\item Then add
\begin{multline}
t(a-1)\epsilon_{+-}x[y^1]\left\{Q(x,y)\right\} + t(a-1)\epsilon_{0-}[y^1]\left\{Q(x,y)-Q(0,y)\right\} \\ + t(a-1)\epsilon_{--}\olx[y^1]\left\{Q(x,y) - Q(0,y)\right\} -t(a-1)\epsilon_{--}[x^1y^1]\left\{Q(x,y)\right\}.
\end{multline}

\item Next add
\begin{equation}
t(b-1)\epsilon_{0+}yQ(0,y) + t(b-1)\epsilon_{0-}\oly\left(Q(0,y) - Q(0,0)\right) - t(b-1)\epsilon_{0-}[y^1]\left\{Q(0,y)\right\}.
\end{equation}

\item Then
\begin{multline}
t(b-1)\epsilon_{-+}y[x^1]Q(x,y) + t(b-1)\epsilon_{-0}[x^1]\left\{Q(x,y)-Q(x,0)\right\} \\ + t(b-1)\epsilon_{--}\oly[x^1]\left\{Q(x,y) - Q(x,0)\right\} - t(b-1)\epsilon_{--}[x^1y^1]\left\{Q(x,y)\right\}.
\end{multline}

\item Finally, add
\begin{multline}
t(ab-1)\epsilon_{-0}[x^1]\left\{Q(x,0)\right\} + t(ab-1)\epsilon_{0-}[y^1]\left\{Q(0,y)\right\} \\ + t(ab-1)\epsilon_{--}[x^1y^1]\left\{Q(x,y)\right\}.
\end{multline}
\end{enumerate}

Adding all these contributions gives an equation with 9 unknown terms.
While this is very unwieldy to write down (so we won't), it is relatively simple to manipulate using computer algebra.
Fortunately, 5 of these unknowns can be eliminated and the equation simplifies considerably, by using some additional relations.
These relations are effectively obtained by extracting the coefficient of $x^0$, $y^0$ or $x^0y^0$, or equivalently by considering those walks that either end on a boundary or at the origin.

Firstly, a walk ending on the $x$-axis was previously either on the $x$-axis or immediately above it (or is the empty walk), so
\begin{multline}
Q(x,0) = 1 + ta\epsilon_{+0}xQ(x,0) + ta\epsilon_{-0}\olx\left(Q(x,0)-Q(0,0)\right) + ta(b-1)\epsilon_{-0}[x^1]\left\{Q(x,0)\right\}
\\ + ta\epsilon_{+-}x[y^1]\left\{Q(x,y)\right\} + ta\epsilon_{0-}[y^1]\left\{Q(x,y)+(b-1)Q(0,y)\right\} \\ + ta\epsilon_{--}\olx[y^1]\left\{Q(x,y) - Q(0,y)\right\} + ta(b-1)\epsilon_{--}[x^1y^1]\left\{Q(x,y)\right\}.
\end{multline}

Similarly for walks ending on the $y$-axis:
\begin{multline}
Q(0,y) = 1 + tb\epsilon_{0+}yQ(0,y) + tb\epsilon_{0-}\oly\left(Q(0,y)-Q(0,0)\right) + t(a-1)b\epsilon_{0-}[y^1]\left\{Q(0,y)\right\}
\\ + tb\epsilon_{-+}y[x^1]\left\{Q(x,y)\right\} + tb\epsilon_{-0}[x^1]\left\{Q(x,y)+(a-1)Q(x,0)\right\} \\ + tb\epsilon_{--}\oly[x^1]\left\{Q(x,y) - Q(x,0)\right\} + t(a-1)b\epsilon_{--}[x^1y^1]\left\{Q(x,y)\right\}.
\end{multline}

Finally, for walks ending at the corner:
\begin{equation}\label{eqn:general_Q00_fe}
Q(0,0) = 1 + tab\epsilon_{-0}[x^1]\left\{Q(x,0)\right\} + tab\epsilon_{0-}[y^1]\left\{Q(0,y)\right\} + tab\epsilon_{--}[x^1y^1]\left\{Q(x,y)\right\}.
\end{equation}

Using all of these equations and eliminating all the unknowns except $Q(x,y)$, $Q(x,0)$, $Q(0,y)$ and $Q(0,0)$ we finally arrive at a functional equation satified by $Q(x,y)$.
\begin{thm}
The generating function $Q(x,y)$, with weight $a$ associated with vertices on the $x$-axis and weight $b$ associated with vertices on the $y$-axis, satisfies the functional equation
\begin{multline}\label{eqn:simplified_general_fe}
xyK(x,y)Q(x,y) = \frac{xy}{ab} + x\left(y - \frac{y}{a} - t A_{-1}(x)\right)Q(x,0) + y\left(x - \frac{x}{b} - t B_{-1}(y)\right)Q(0,y) \\ - \left(\frac{xy}{ab}\left(1-a\right)\left(1-b\right) - t\epsilon\right)Q(0,0).
\end{multline}
\end{thm}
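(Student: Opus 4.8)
The plan is to carry out a finite, purely algebraic elimination, treating \eqref{eqn:general_fe} together with the corrections (1)--(5) as a single \emph{master equation} and then using the three boundary relations to remove the auxiliary unknowns. First I would rewrite the base equation \eqref{eqn:general_fe} in its kernel form $xyK(x,y)Q(x,y) = xy - txA_{-1}(x)Q(x,0) - tyB_{-1}(y)Q(0,y) + t\epsilon Q(0,0)$ and add the contributions (1)--(5) to the right-hand side. The resulting equation involves nine unknown quantities: the full series $Q(x,y)$; the three boundary series $Q(x,0)$, $Q(0,y)$, $Q(0,0)$; the two one-variable \emph{section} functions $A(x):=[y^1]\{Q(x,y)\}$ and $B(y):=[x^1]\{Q(x,y)\}$; and the three scalars $c_1:=[x^1]\{Q(x,0)\}$, $c_2:=[y^1]\{Q(0,y)\}$ and $c_3:=[x^1y^1]\{Q(x,y)\}$. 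Since the target \eqref{eqn:simplified_general_fe} contains only the first four, the entire content of the theorem is that the remaining five can be eliminated.

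The key structural observation, which I would establish before anything else, is that the sections $A(x)$ and $B(y)$ never appear on their own: in corrections (2) and (4) they are always multiplied by the Laurent polynomials $\epsilon_{+-}x + \epsilon_{0-} + \epsilon_{--}\olx = A_{-1}(x)$ and $\epsilon_{-+}y + \epsilon_{-0} + \epsilon_{--}\oly = B_{-1}(y)$ respectively, so that $A(x)$ enters the master equation only through $t(a-1)A_{-1}(x)A(x)$ and $B(y)$ only through $t(b-1)B_{-1}(y)B(y)$. Applying the same regrouping to the relation for $Q(x,0)$ shows that its section terms collapse to $ta\,A_{-1}(x)A(x)$, so that relation can be solved for $t\,A_{-1}(x)A(x)$ in terms of $Q(x,0)$, $Q(0,0)$ and the scalars $c_1,c_2,c_3$; symmetrically, the relation for $Q(0,y)$ yields $t\,B_{-1}(y)B(y)$. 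Substituting these two expressions into the master equation removes $A(x)$ and $B(y)$ entirely, leaving an equation in $Q(x,y)$, $Q(x,0)$, $Q(0,y)$, $Q(0,0)$ and $c_1,c_2,c_3$ alone.

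It then remains to remove the three scalars. Here I would invoke \eqref{eqn:general_Q00_fe}, which reads $tab\,(\epsilon_{-0}c_1 + \epsilon_{0-}c_2 + \epsilon_{--}c_3) = Q(0,0)-1$. The crux of the proof is to verify that, after the substitutions of the previous paragraph, the constants $c_1,c_2,c_3$ survive only in exactly this linear combination; granting that, they collapse to $(Q(0,0)-1)/ab$ and disappear, and a final collection of coefficients produces the closed forms $y - y/a - tA_{-1}(x)$, $x - x/b - tB_{-1}(y)$ and $\frac{xy}{ab}(1-a)(1-b) - t\epsilon$ appearing in \eqref{eqn:simplified_general_fe}. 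I expect this constant bookkeeping to be the main obstacle: unlike the elimination of $A(x)$ and $B(y)$, which is forced by the $A_{-1}/B_{-1}$ factorisation, the vanishing of the stray $c_i$ rests on a delicate cancellation among the many $\epsilon_{ij}$-weighted terms contributed by corrections (1)--(5) and the three boundary relations. Because this cancellation is model-independent --- it holds as an identity in the formal symbols $\epsilon_{ij}$, $a$, $b$ and $t$ --- it is most safely confirmed with computer algebra, exactly as indicated above, although in principle it is a finite hand computation.
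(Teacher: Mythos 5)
Your proposal is correct, but it is not the argument the paper actually writes down: you are carrying out the elimination that the paper gestures at with ``We can proceed as described above'' and then declines to perform as unwieldy, whereas the paper's written proof is a double-counting argument. The paper introduces
\begin{equation*}
Q^\dagger(x,y) = Q(x,y) + \tfrac1a Q(x,0) + \tfrac1b Q(0,y) + \left(\tfrac{1}{ab}-\tfrac1a-\tfrac1b\right)Q(0,0),
\end{equation*}
observes that this counts every boundary-ending walk exactly twice (once correctly weighted, once short by precisely the factor $a$, $b$ or $ab$), and then counts the same multiset a second time by a last-step decomposition; equating the two counts and multiplying by $xy$ yields \eqref{eqn:simplified_general_fe} without the sections $[y^1]\{Q(x,y)\}$, $[x^1]\{Q(x,y)\}$ or the corner scalars ever entering the computation. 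Your route is heavier but sound, and the two structural facts you isolate are exactly what make it work. Moreover, the cancellation you flag as the crux does hold, and it is a short hand computation rather than something requiring computer algebra: after substituting the two boundary relations, the only scalar terms carrying $x$- or $y$-dependence are $-t(a-1)\epsilon_{--}\olx c_2$ (from correction (2)) and $-t(b-1)\epsilon_{--}\oly c_1$ (from correction (4)), and these are killed exactly by the terms $+t(a-1)\epsilon_{--}\olx c_2$ and $+t(b-1)\epsilon_{--}\oly c_1$ that the substituted relations for $Q(x,0)$ and $Q(0,y)$ bring in; collecting what remains, corrections (1)--(5) contribute $+t(a-1)(b-1)E$ with $E=\epsilon_{-0}c_1+\epsilon_{0-}c_2+\epsilon_{--}c_3$, while each of the two substitutions contributes $-t(a-1)(b-1)E$, leaving $-t(a-1)(b-1)E = -\frac{(a-1)(b-1)}{ab}\left(Q(0,0)-1\right)$ by \eqref{eqn:general_Q00_fe}, after which the surviving coefficients telescope directly to those in \eqref{eqn:simplified_general_fe}. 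In short, your proof is complete once that one-page check is written out. What the paper's argument buys is brevity and a combinatorial explanation of why the final equation is so clean; what yours buys is a purely mechanical verification that the nine-unknown system is consistent and genuinely collapses to the four unknowns, with no insight (like the construction of $Q^\dagger$) required in advance.
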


\begin{proof}
We can proceed as described above. Alternatively, we can derive the equation via a double-counting argument, which we give here.

Let
\begin{equation}
Q^\dagger(x,y) = Q(x,y) + \frac1a Q(x,0) + \frac1b Q(0,y) + \left(\frac{1}{ab}-\frac1a-\frac1b\right)Q(0,0).
\end{equation}
Observe that $Q^\dagger(x,y)$ counts every walk ending on a boundary twice: once in $Q(x,y)$ (with the correct weight), and once in the other terms (underweighted by exactly one factor of $a$, $b$ or $ab$, depending on whether the walk ends on the $x$-axis, the $y$-axis, or in the corner). Every other walk is counted once with the correct weight (in $Q(x,y)$).

Let us now count this same set in another way.
Firstly, walks ending on the boundaries (with the correct weights) have generating function
\begin{equation}\label{eqn:weirdsum_part1}
Q(x,0) + Q(y,0) - Q(0,0).
\end{equation}
Next, a non-empty walk can be constructed by appending a step to an existing walk, while being careful to subtract off walks that overstep the boundaries:
\begin{equation}\label{eqn:weirdsum_part2}
t\left(S(x,y)Q(x,y) - \oly A_{-1}(x)Q(x,0) - \olx B_{-1}(y)Q(0,y) + \olx\oly\epsilon Q(0,0)\right).
\end{equation}
Note that a walk counted by~\eqref{eqn:weirdsum_part2} which ends on a boundary will be missing exactly one factor of $a$, $b$ or $ab$, depending on whether it finishes on the horizontal boundary, vertical boundary or at the corner (respectively). Walks which do not end on a boundary are correctly weighted.

Finally, the empty walk has not yet been counted with the incorrect weight -- this is simply $\frac{1}{ab}$. Adding this term to~\eqref{eqn:weirdsum_part1} and~\eqref{eqn:weirdsum_part2}, and equating with $Q^\dagger(x,y)$, gives
\begin{align}
Q^\dagger(x,y) &= Q(x,y) + \frac1a Q(x,0) + \frac1b Q(0,y) + \left(\frac{1}{ab}-\frac1a-\frac1b\right)Q(0,0) \notag \\
&=
\begin{multlined}[t]
  \frac{1}{ab} + Q(x,0) + Q(y,0) - Q(0,0) \\ + t\left(S(x,y)Q(x,y) - \oly A_{-1}(x)Q(x,0) - \olx B_{-1}(y)Q(0,y) + \olx\oly\epsilon Q(0,0)\right).
\end{multlined}
\end{align}
Multiplying both sides by $xy$ and rearranging gives~\eqref{eqn:simplified_general_fe}.
\end{proof}

Notice that equation~\eqref{eqn:simplified_general_fe} neatly separates into a bulk term, involving $Q(x,y)$, on the left-hand side, and initial and boundary terms, involving $Q(x,0), Q(0,y)$ and $Q(0,0)$, on the right-hand side.
To solve these equations for different step sets we use the obstinate kernel method \cite{bousquet2002counting}.
For almost all models we use a half-orbit sum over selected symmetries of the kernel, which  allows us to remove selected boundary terms.
Then setting the kernel to zero by choosing an appropriate root then eliminates the bulk terms.
We then extract the power series $Q(0,0)$ by calculating a constant term.
Model 19, Kreweras walks, requires a modified approach --- see \cite{bousquet2005} and Section~\ref{sec soln 19}.

\section{Models 1, 3 \& 4}
\label{sec soln 134}

Models 1--4 all have vertical and horizontal symmetry.
It turns out that Model 2 has an additional property which simplifies its solution, so we will address that case separately.
For brevity we will focus here on Model 1, as 3 and 4 can be solved using very similar arguments.

\subsection{Model 1}
Here we have $S(x,y) = x + y + \olx + \oly$, so that $A_{-1}(x) = B_{-1}(y) = 1$ and $\epsilon = 0$. Equation~\eqref{eqn:simplified_general_fe} becomes
\begin{multline}\label{eqn:nsew_main_fe}
xyK(x,y)Q(x,y) = \frac{xy}{ab} + x\left(y - \frac{y}{a}-t\right)Q(x,0) \\
  + y\left(x-\frac{x}{b}-t\right)Q(0,y) - \frac{xy}{ab}(1-a)(1-b)Q(0,0),
\end{multline}
where the kernel is
\begin{align}
K(x,y) &= 1-t(x+y+\olx+\oly).
\end{align}
As noted in Table~\ref{tab 1to4}, the kernel is symmetric under the involutions $(x,y)\mapsto (\olx,y)$ and $(x,y)\mapsto (x,\oly)$.
These generate a group of 4 symmetries $\{(x,y), (\olx,y), (x,\oly), (\olx,\oly) \}$.
The latter three symmetries can be used to construct new equations.
We will, shortly, set the kernel to zero by substituting a power series for $y$.
Because of this, we will only make use of $\{(x,y), (\olx,y) \}$, since the power series that then result from substituting a power series for $y$ are convergent. If we use $\{(x,\oly), (\olx,\oly) \}$, then we cannot make the same substitution for $y$ since the results do not converge inside the ring of formal power series.

Substituting $(x,y)\mapsto (\olx,y)$ into our functional equation gives
\begin{multline}\label{eqn:nsew_reflected_fe}
\olx yK(x,y)Q(\olx,y) = \frac{\olx y}{ab} + \olx\left(y - \frac{y}{a}-t\right)Q(\olx,0) \\
+ y\left(\olx-\frac{\olx}{b}-t\right)Q(0,y) - \frac{\olx y}{ab}(1-a)(1-b)Q(0,0).
\end{multline}
We then eliminate $Q(0,y)$ by taking an appropriate linear combination of equations~\eqref{eqn:nsew_main_fe} and~\eqref{eqn:nsew_reflected_fe},
\begin{multline}\label{eqn:nsew_fe_after_halforbit}
\frac{ayK(x,y)}{ay-ta-y}\left(Q(x,y)+\frac{\olx(bx-tb-x)}{tbx-b+1}Q(\olx,y)\right)  = \frac{t\olx y(x^2-1)}{(ay-ta-y)(tbx-b+1)} + Q(x,0) \\ + \frac{\olx(bx-tb-x)}{tbx-b+1}Q(\olx,0) -\frac{t\olx y(a-1)(b-1)(x^2-1)}{(ay-ta-y)(tbx-b+1)}Q(0,0).
\end{multline}

Let $Y \equiv Y(t;x)$ be the root of $K(x,y)$ in the variable $y$ which is a power series in $t$.
\begin{align}
  Y(t;x) &= \frac{x-t-tx^2 - \sqrt{(t-x-2tx+tx^2)(t-x+2tx+tx^2)}}{2tx} = t+ O(t^2).
\end{align}
Since all unknowns in~\eqref{eqn:nsew_fe_after_halforbit} are power series in $t$ with coefficients that are polynomials in $y$, the substitution $y\mapsto Y$ is valid (ie.~all functions converge as power series), and cancels the LHS of~\eqref{eqn:nsew_fe_after_halforbit}.

Thus
\begin{multline}\label{eqn:nsew_fe_after_halforbit_cancelkernel}
0 = \frac{t\olx Y(x^2-1)}{(aY-ta-Y)(tbx-b+1)} + Q(x,0) \\ + \frac{\olx(bx-tb-x)}{tbx-b+1}Q(\olx,0) -\frac{t\olx Y(a-1)(b-1)(x^2-1)}{(aY-ta-Y)(tbx-b+1)}Q(0,0).
\end{multline}
Notice that the coefficient of $Q(\olx,0)$ is a rational function of $b,x$ and $t$.
This allows us to compute a required constant term in closed form as we now demonstrate.

\subsubsection{General $a,b$ directly}\label{sssec:nsew_general_ab_direct}
Notice that the first and last terms in equation~\eqref{eqn:nsew_fe_after_halforbit_cancelkernel} are nearly identical. Grouping them together gives
\begin{multline}\label{eqn:nsew_rearranged_slightly}
0 = \frac{t\olx Y(x^2-1)}{(aY-ta-Y)(tbx-b+1)} \Big(1 - (a-1)(b-1)Q(0,0) \Big) \\
+ Q(x,0) + \frac{\olx(bx-tb-x)}{tbx-b+1}Q(\olx,0).
\end{multline}
Write
\begin{align}
  C(t) &= [x^0] \left\{ \frac{t\olx Y(x^2-1)}{(aY-ta-Y)(tbx-b+1)} \right\},
\end{align}
then the coefficient of $x^0$ of the first term is
\begin{equation}\label{eqn:nsew_ct_simple_bits}
C(t) \left(1 - (a-1)(b-1)Q(0,0)\right).
\end{equation}
The constant term of the second term is just $Q(0,0)$.

As remarked above, the rationality of the coefficient of $Q(\olx,0)$ allows us to compute the required constant term.
The second term can be viewed as the product of two power series in $t$:
\begin{align}
\frac{bx-bt-x}{(bxt - b + 1)x}
&= -\frac{1}{x^2} + \frac{1-x^2}{x^2} \sum_{n\geq 0} \left(\frac{bx}{b-1}\right)^n t^n
\nonumber \\
&= -\frac{1}{x^2} + \sum_{n \geq 0} \left(\frac{tb}{b-1} \right)^n (x^n - x^{n-2}) \\
\intertext{and}
Q(\olx,0) &= \sum_{m,k\geq 0} q_{m,k,0} t^m x^{-k}.
\end{align}
When we take the product and extract the constant term, the only terms that are left are
\begin{multline}
- \sum_{m,k\geq0} q_{m,k,0} t^n \left(\frac{tb}{b-1}\right)^k
+ \sum_{m,k\geq0} q_{m,k,0} t^n \left(\frac{tb}{b-1}\right)^{k+2} \\
= -\left(1 - \left(\frac{bt}{b-1}\right)^2 \right) Q\left(\frac{tb}{b-1},0 \right).
\end{multline}
Reassembling these components we have
\begin{align}\label{eqn after ct}
  0 &= C(t) \left(1 - (a-1)(b-1)Q(0,0)\right) + Q(0,0) - \left(1 - \left(\frac{bt}{b-1}\right)^2 \right) Q\left(\frac{tb}{b-1},0 \right).
\end{align}

At this point, it appears that we have just replaced $Q(\olx,0)$ with a new unknown $Q\left(\frac{tb}{b-1},0\right)$.
In fact, this new unknown can be expressed in terms of $Q(0,0)$ by careful manipulation of the original functional equation.
Substituting $x\mapsto \frac{tb}{b-1}$ in~\eqref{eqn:nsew_main_fe} we get
\begin{multline}
\frac{tby}{b-1}K\left(\frac{tb}{b-1},y\right)Q\left(\frac{tb}{b-1},y\right) \\
= \frac{ty}{a(b-1)} + \frac{tb}{b-1}\left(y-\frac{y}{a}-t\right)Q\left(\frac{tb}{b-1},0\right) + \frac{ty}{a}(1-a)Q(0,0).
\end{multline}
Note, in particular, that the $Q(0,y)$ term has been cancelled.
The new kernel $K\left(\frac{tb}{b-1},y\right)$ has a power series root in $y$:
\begin{align}
  Y'(t;b) &= Y\left(t; \frac{tb}{b-1}\right).
\end{align}
Substituting $y \mapsto Y'$, we have
\begin{equation}\label{eqn:nsew_cancel_second_kernel}
0 = \frac{Y'}{a(b-1)} + \frac{b}{b-1}\left(Y'-\frac{Y'}{a}-t\right)Q\left(\frac{tb}{b-1},0\right) + \frac{Y'}{a}(1-a)Q(0,0).
\end{equation}
Now isolate $Q\left(\frac{tb}{b-1},0\right)$:
\begin{align}
  Q\left(\frac{tb}{b-1},0\right) &= -\frac{Y'}{b(aY'-Y'-ta)} \left( 1- (a-1)(b-1) Q(0,0) \right) \\
  & \equiv -P \cdot \left( 1- (a-1)(b-1) Q(0,0) \right).
\end{align}
Substituting this into~\eqref{eqn after ct} and solving for $Q(0,0)$ gives
\begin{align}
G(t;a,b) &= Q(0,0) \\ &=
\begin{multlined}[t]
\frac{1}{(a-1)(b-1)} \\
+ \frac{1}{(a-1)(b-1) + (a-1)^2(b-1)^2 C - (a-1)^2(tb-b+1)(tb+b-1) P}.
\end{multlined}
\end{align}
The first few terms are
\begin{multline}
Q(0,0) = 1 + (a^2 b + a b^2) t^2 \\ + (a^3 b + a^4 b + 2 a^2 b^2 + a^4 b^2 +
    a b^3 + 2 a^3 b^3 + a b^4 + a^2 b^4) t^4 + O(t^6).
\end{multline}
Since $P$ is algebraic and $C$ is D-finite, $Q$ is at worst D-algebraic.

\subsubsection{General $a,b$ via $a=1$ and $b=1$}\label{sssec:nsew_general_ab_via_a1b1}
We now present a similar solution to the model which uncovers more structure of the problem, at the expense of being less direct.
In particular we find an expression for $G(t;a,b)$ in terms of $G(t;a,1)$ and $G(t;1,b)$.
These two subproblems are significantly easier to solve. Let $P(x,y) = Q(x,y)|_{b=1}$ and $R(x,y) = Q(x,y)|_{a=1}$.
We first solve for $P$ and then $R$ follows immediately by symmetry.

When $b=1$ the functional equation and its solution simplify dramatically. Equation~\eqref{eqn:nsew_fe_after_halforbit_cancelkernel} becomes
\begin{equation}\label{eqn nick ref this}
0 = \frac{\olx^2 Y(x^2-1)}{aY-ta-Y} + P(x,0) \\ - \olx^2P(\olx,0)
\end{equation}
and hence
\begin{equation}\label{eqn:nsew_b=1}
P(0,0) = -[x^0]\left\{\frac{\olx^2 Y(x^2-1)}{aY-ta-Y}\right\}.
\end{equation}
This is a D-finite function. By symmetry we then have the solution for $R(0,0)$.

We now show that the solution for general $a,b$ can be written in terms of $P$ and $R$. Rearrange~\eqref{eqn:nsew_fe_after_halforbit_cancelkernel} once more:
\begin{multline}
0 = \frac{t\olx^2 Y(x^2-1)}{(aY-ta-Y)} + \olx(tbx-b+1)Q(x,0) \\ + \olx^2(bx-tb-x)Q(\olx,0) -\frac{t\olx^2 Y(a-1)(b-1)(x^2-1)}{aY-ta-Y}Q(0,0).
\end{multline}
Notice that the $Q$-independent term is precisely $t$ times the $P$-independent term in~\eqref{eqn nick ref this}.
Hence its constant term is exactly $t P(0,0)$. Consequently the constant term of the whole equation is
\begin{equation}\label{eqn 1P}
0 = -tP(0,0) + tbQ(0,0) - (b-1)[x^1]\left\{Q(x,0)\right\} + t(a-1)(b-1)P(0,0)Q(0,0).
\end{equation}
By symmetry,
\begin{equation}\label{eqn 1R}
0 = -tR(0,0) + taQ(0,0) - (a-1)[y^1]\left\{Q(0,y)\right\} + t(a-1)(b-1)R(0,0)Q(0,0).
\end{equation}
At this point we have introduced two new unknowns $[x^1]\left\{Q(x,0)\right\}$ and $[y^1]\left\{Q(0,y)\right\}$.
These can be eliminated using the following relation
\begin{equation}
Q(0,0) = 1 + tab[x^1]\left\{Q(x,0)\right\} + tab[y^1]\left\{Q(0,y)\right\},
\end{equation}
which is equation~\eqref{eqn:general_Q00_fe} for Model 1.
This equation simply says that any walk ending in the corner either has length 0 or arrives by taking a step south or west.
This system of three equations can be solved to find $Q(0,0)$ in terms of $P\equiv P(0,0)$ and $R\equiv R(0,0)$:
\begin{multline}
  Q(0,0) = \\
\frac{(1 - a)(1-b) - t^2ab\left((a-1)P + (b-1)R\right)}{(1 - a)(1-b) - t^2ab(2ab-a-b) - t^2ab(a-1)(b-1)\left((a-1)P + (b-1)R\right)}.
\end{multline}
When $a=1$ or $b=1$, this simply reduces to $R(0,0)$ and $P(0,0)$ respectively.
Also the above expression can be written in partial fraction form so that the numerator is free of $P$ and $R$.
The solution of Model~22 \cite{tabbara2013} has a very similar form.

\subsection{Models 3 \& 4}

The methodology from Section~\ref{sssec:nsew_general_ab_direct} above can be used to solve Models 3 and 4.
Things are complicated somewhat by the fact that the denominator of the coefficient of $Q(\olx, 0)$ in the equivalent of~\eqref{eqn:nsew_fe_after_halforbit_cancelkernel} is quadratic in $x$.
The method still enables us to find $Q(0,0)$ as the ratio of two D-finite functions, and so at worst the solution is D-algebraic.
Moreover, at $a=1$ or $b=1$, the solution is D-finite, for the same reason as for Model 1. We do not know how to apply the methodology of Section~\ref{sssec:nsew_general_ab_via_a1b1} to Models 3 and 4, again due to the presence of that quadratic term. This term leads to extra unknowns in the equivalents of~\eqref{eqn 1P} and~\eqref{eqn 1R}.


\section{Model 2}
\label{sec soln 2}

Model 2 has horizontal and vertical symmetry but is different from Models 1, 3 and 4 in that it has a D-finite generating function for all $a,b$. In contrast, the solution method, via functional equations is actually more complicated.

There is, however, a very elegant solution that comes from the observation that $G(t;1,1)$ can be written as the Hadamard product of the generating function of Dyck paths with itself. To see this, note that any walk starting and ending at the origin can be factored into a pair of Dyck paths. More precisely, the $x$-coordinate (and $y$-coordinate) for any such walk of length $2n$ is a sequence of non-negative integers starting and ending at $0$, with steps $\pm 1$.
The number of such sequences is given by the $n^\mathrm{th}$ Catalan number $c_n = \frac{1}{n+1}\binom{2n}{n}$.
Since these two sequences are completely independent we have that the number of such paths (ignoring, for the moment visits to either wall) is just $c_n^2$.

This reasoning continues to hold when visits to either wall are counted, and one obtains
\begin{align}
  [t^n]\left\{Q(0,0)\right\} = G(t;a,b) &= Z_{2n}(a) Z_{2n}(b),
\end{align}
where $Z_{2n}(a)$ is the coefficient of $t^n$ in
\begin{align}
  f(t;a) &= \frac{2}{2-a-a\sqrt{1-4t^2}} = \sum_{n\geq 0} t^{2n} \sum_{k=1}^n \frac{k}{n} \binom{2n-k-1}{n-k}a^k,
\end{align}
being the generating function of Dyck paths counted by their length and number of visits to the axis.
This generating function is classical and can be quickly established via a factorisation argument which gives the functional equation
\begin{align}
  f(t;a) &= 1 + t^2 f(t;1) f(t;a).
\end{align}
Consequently the generating function $Q(0,0)$ can be written as the Hadamard product
\begin{align}
  Q(0,0) = G(t;a,b) &= f(t;a) \odot_t f(t;b) = \sum_{n\geq0} t^n \left([t^n]\{f(t,a)\} \right)\left([t^n]\{f(t,b)\} \right).
\end{align}
Since D-finite functions are closed under Hadamard products \cite{lipshitz1988}, the above is D-finite.
Note that it cannot be algebraic, because when $a,b=1$, the coefficients of $Q(0,0)$ grow as $16^n/n^3$, a form that is incompatible with algebraic generating functions --- see, for example, Section VII.7 of \cite{flajolet2009}.

We can also show that the solution is D-finite via the functional equation using the kernel method.
The functional equation is
\begin{multline}
xyK(x,y)Q(x,y) = \frac{xy}{ab} + x\left(y - \frac{y}{a} - t(x+\olx)\right)Q(x,0) \\
+ y\left(x - \frac{x}{b} - t(y+\oly)\right)Q(0,y)  - \left(\frac{xy}{ab}\left(1-a\right)\left(1-b\right) - t\right)Q(0,0),
\end{multline}
where
\begin{equation}
K(x,y) = 1-t(xy + x\oly + \olx y+\olx\oly).
\end{equation}
As noted in Table~\ref{tab 1to4}, the kernel is invariant under the involutions $(x,y)\mapsto(\olx,y), (x,\oly)$.
Proceeding as for Model 1, we map $(x,y)\mapsto (\olx,y)$,
\begin{multline}
\olx yK(x,y)Q(\olx,y) = \frac{\olx y}{ab} + \olx\left(y - \frac{y}{a} - t(x+\olx)\right)Q(\olx,0)\\
 + y\left(\olx - \frac{\olx}{b} - t(y+\oly)\right)Q(0,y) - \left(\frac{\olx y}{ab}\left(1-a\right)\left(1-b\right) - t\right)Q(0,0).
\end{multline}
Eliminating $Q(0,y)$ between the two functional equations we have,
\begin{multline}
\frac{a x yK(x,y)}{ta + ta x^2+x y-a x y}\left(-Q(x,y)
+\frac{\olx(tb+x y-b x y+tb y^2)}{tb x+y-b y+tb x y^2}Q(\olx,y)\right)
\\ = -\frac{ty(x^2-1)(1+y^2)}{(ta+ta x^2+x y-a x y) (tb x+y-b y+tb x y^2)} + Q(x,0) \\ - \frac{\olx(tb+x y-b x y+tb y^2)}{tb x+y-b y+tb x y^2}Q(\olx,0) \\
+ \frac{ty(b-1) (x^2-1)(ay^2-y^2-1)}{(ta+ta x^2+x y-a x y) (tb x+y-b y+tb x y^2)}Q(0,0).
\end{multline}

As for Model 1, we set the kernel to zero by substituting $y=Y(t;x)$ being a power series solution to $K(x,y)=0$.
After simplifying the coefficients we arrive at
\begin{multline}\label{eqn:diag_after_orbit_sum}
0 = -\frac{xY(x^2-1)}{(1+x^2-b) (ta+ta x^2+x Y-a xY)}\\
 + Q(x,0) -\frac{1+x^2-b x^2}{1+x^2-b}Q(\olx,0)  -\frac{(b-1) (x^2-1)}{1+x^2-b}Q(0,0).
\end{multline}

The first term in~\eqref{eqn:diag_after_orbit_sum} can be viewed as a power series in $t$ with coefficients that are Laurent polynomials in $x$; it thus has a well-defined constant term with respect to $x$ (see~\eqref{eqn:diag_soln_from_CT_xb} below).

Now the coefficient of $Q(\olx,0)$ does not depend on $t$, so expanding everything as a power series in $t$ and taking the constant term with respect to $x$ is not an option here. We are left to consider things as in terms of $x$ or, alternatively, $\olx$.

Expanding in $\olx$, things are straightforward. Since
\begin{align}
-\frac{1+x^2-b x^2}{1+x^2-b} &= (b-1) + b(b-2)\olx^2 + O(\olx^4), \text{ and} \\
-\frac{(b-1) (x^2-1)}{1+x^2-b} &= -(b-1) - (b-1)(b-2)\olx^2 + O(\olx^4),
\end{align}
the third and fourth terms cancel, leaving
\begin{equation}\label{eqn:diag_soln_from_CT_xb}
Q(0,0) = [x^0]\left\{\frac{xY(x^2-1)}{(1+x^2-b) (ta+ta x^2+x Y-a xY)}\right\}.
\end{equation}
This shows that $Q(0,0)$ is D-finite.
We note that if one expands in $x$ rather than $\olx$ then one can arrive, with more work, at a similar though less appealing D-finite expression.

\section{D-finite solution to horizontally symmetric models}\label{sec soln 5to16}
Models 1 through 16 all possess symmetry across a vertical line, and we have stated that all of them are D-finite when $b=1$.
We prove that this is the case using a variation of Bousquet-M\'elou's argument from \cite{bousquet2002counting}.
Let us start by giving the functional equation satisfied by these models when $b=1$:
\begin{align}
xyK(x,y)Q(x,y) = \frac{xy}{a} + x\left(y - \frac{y}{a} - t A_{-1}(x)\right)Q(x,0) - ty B_{-1}(y)Q(0,y) + t\epsilon Q(0,0).
\end{align}

Since the step set is symmetric across a vertical line, the kernel is symmetric under the involution $(x,y) \mapsto (\olx,y)$.
Using that substitution we obtain
\begin{align}
  \olx y K(x,y)Q(\olx,y) = \frac{y}{ax} + \olx\left(y - \frac{y}{a} - t A_{-1}(x)\right)Q(\olx,0) - ty B_{-1}(y)Q(0,y) + t\epsilon Q(0,0),
\end{align}
where we have used the fact that $A_{-1}(\olx)=A_{-1}(x)$ (thanks to the symmetry of the step set).
Subtracting one equation from the other eliminates the $Q(0,y)$ term:
\begin{align}
    K(x,y) \left(xy Q(x,y) - \olx y Q(\olx,y) \right)
    &= \frac{y(x-\olx)}{a} + \left(y - \frac{y}{a} - t A_{-1}(x) \right)\left(xQ(x,0) - \olx Q(\olx,0)\right).
\end{align}
We can now set the kernel to zero by setting $y = Y(t;x) \equiv Y(x)$, being the power series solution of $K(x,y)=0$.
After a little rearrangement we have
\begin{align}
  Q(x,0) - \olx^2 Q(\olx,0)
  &= \frac{Y(x) (1-\olx^2)}{Y(x)(1-1/a) - t A_{-1}(x)}.
\end{align}
Now we are in familiar territory and we proceed by extracting constant term with respect to $x$ from both sides:
\begin{align}
    Q(0,0)=G(t;a,1) &= [x^{0}]\left\{ \frac{Y(x) (1-\olx^2)}{at A_{-1}(x) + (1-a)Y(x)} \right\}.
\end{align}
This shows that $Q(0,0)$ is D-finite.
We note that when $a=b=1$ none of Models 1 through 16 have an algebraic generating function, and so none of these models have an algebraic generating function when $b=1$.

\section{Model 17}\label{sec soln 17}
The functional equation and kernel of Model 17 are:
\begin{align}
  \label{eqn 17original}
  abxy K(x,y) Q(x,y)  &=
\begin{multlined}[t]
  xy
  + bx\left(ay - y - axt \right)Q(x,0)  \\
  + ay\left(bx - x - bt \right)Q(0,y)
  - (a-1)(b-1)xy Q(0,0),
\end{multlined}\\
\text{where }
K(x,y) &= 1-t(y + \olx  + x\oly ).
\end{align}

As noted in Table~\ref{tab 1718}, the kernel is symmetric under involutions $(x,y) \mapsto (\olx y,y)$ and $(x,y) \mapsto (x,x\oly)$.
This gives a group of 6 symmetries, which generate 5 new equations. Let us start by using those obtained by setting $(x,y) = (\olx y,y), (\olx y,\olx )$:
\begin{align}
  ab\olx y^2 K(\olx y,y) Q(x,y) &
\begin{multlined}[t]
 = \olx y^2
  + b\olx^2y^2\left(ax-x-at \right)Q(\olx y,0) \\
  + a\olx y\left(by-y-bxt \right)Q(0,y)
  - (a-1)(b-1) \olx y^2 Q(0,0)
\end{multlined}
\intertext{and}
  abxy K(x,y) Q(\olx y,\olx )&
  \begin{multlined}[t]
    = \olx ^2y
  + b\olx^2 y\left(a-1-ayt \right)Q(\olx y,0) \\
  + a\olx^2\left(by-y-bxt \right)Q(0,\olx )
  - (a-1)(b-1) \olx ^2y Q(0,0).
\end{multlined}
\end{align}
We can set the kernel to zero in all of these by choosing $Y = Y(x)$, being the power series solution to $K(x,y)=0$.
This leaves only the right-hand side of the 3 equations.
Then taking a linear combination of the equations we eliminate the $Q(0,y)$ and $Q(\olx y,0)$ terms, and after a little massaging of the result we have
\begin{multline}\label{eqn 17fq1}
Q(x,0) +
{\frac {a \left( tb-bx+x \right)  \left( at-ax+x \right) Y}{{x}^{2} \left( Yat-a+1 \right)  \left( -txa+Ya-Y
 \right) b} Q(0,\olx )} \\
= H(x,Y(x);a,b)\left( 1 - Q(0,0)(a-1)(b-1)\right)
\end{multline}
for a function $H$ which is rational in its arguments.

With a little algebra one can verify that the coefficient of $Q(0,\olx )$ is actually a rational function of $t,x,a,b$:
\begin{align}
  {\frac {a \left( tb-bx+x \right)  \left( at-ax+x \right) }{ \left( {x}^{2}{t}^{2}{a}^{2}+t{a}^{2}-at-ax+x \right) xb}}.
\end{align}

Before solving equation~\eqref{eqn 17fq1} in generality, let us first verify that when $a=1$ we obtain a D-finite solution.
\subsection{An aside to $a=1$}
Note that equation~\eqref{eqn 17fq1} simplifies considerably when $a=1$ to give
\begin{align}
  Q(x,0) + \frac{tbx-bx+x}{x^3}Q(0,\olx ) &=
  \frac{Y(Ybtx-btx^3-bt+bx-x)}{tbx^3(Yb-btx-Y)}.
\end{align}
Extracting the constant term with respect to $x$ of both sides then yields
\begin{align}
  \left.Q(0,0)\right|_{a=1} = G(t;1,b) &=
  [x^{0}]\left\{  \frac{Y(Ybtx-btx^3-bt+bx-x)}{tbx^3(Yb-btx-Y)}
  \right\}.
\end{align}
This demonstrates that when $a=1$ $Q(0,0)$ is D-finite.

\subsection{Back to general $(a,b)$}
We rewrite equation~\eqref{eqn 17fq1} with the coefficient of $Q(0,\olx )$ made explicitly rational:
\begin{multline}\label{eqn 17fq2}
Q(x,0) +
\frac {a \left( tb-bx+x \right)  \left( at-ax+x \right) }{ \left( {x}^{2}{t}^{2}{a}^{2}+t{a}^{2}-at-ax+x \right) xb}
Q(0,\olx ) \\
= H(x,Y(x);a,b)\left( 1 - Q(0,0)(a-1)(b-1)\right).
\end{multline}
We proceed as per the $a=1$ case, by taking the constant term of the above with respect to $x$. The main difficult now lies in computing
\begin{align}
[x^0] \left\{  \frac {a \left( tb-bx+x \right)  \left( at-ax+x \right) }{ \left( {x}^{2}{t}^{2}{a}^{2}+t{a}^{2}-at-ax+x \right) xb}
  Q(0,\olx ) \right\}.
\end{align}
Write the coefficient of $Q(0,\olx )$ as
\begin{align}\label{eqn 17 parfrac}
  C_0 + \frac{C_1}{x} + \frac{C_2}{1-x/\xi_+} + \frac{C_3}{1-\olx \cdot \xi_-},
\end{align}
where $\xi_\pm$ are solutions of the denominator factor ${x}^{2}{t}^{2}{a}^{2}+t{a}^{2}-at-ax+x$:
\begin{align}
  \xi_+ &= \frac{(a-1) + \sqrt{(1-a)(1-a+4a^3t^3)}}{2a^2t^2} = \frac{a-1}{a^2} t^{-2} + O(t); \\
  \xi_- &= \frac{(a-1) - \sqrt{(1-a)(1-a+4a^3t^3)}}{2a^2t^2} = at + O(t^4).
\end{align}
This particular combination of $x, \olx, \xi_\pm$ was chosen in~\eqref{eqn 17 parfrac} so that the expression is a power series in $t$.
The coefficient $C_1 = \frac{at}{a-1}$ while the others are somewhat messy algebraic functions of $a,b$ and $t$.
The desired constant term can now be written quite cleanly in terms of the $C_i$:
\begin{align}
[x^0] \left\{
\left( C_0 + \frac{C_1}{x} + \frac{C_2}{1-x/\xi_+} - \frac{C_0}{1-\olx \cdot \xi_-} \right)
Q(0,\olx ) \right\}
&= C_2 \cdot Q(0,1/\xi_+),
\end{align}
where we have used the fact that $C_0 = -C_3$.
This transforms equation~\eqref{eqn 17fq2} into
\begin{align}\label{eqn 17fq3}
  Q(0,0) +
  C_2 Q(0,1/\xi_+) &= \left( 1 - Q(0,0)(a-1)(b-1)\right) \cdot [x^0] \left\{ H(x,Y(x);a,b) \right\}.
\end{align}
Thankfully we can now compute $Q(0,1/\xi+)$ in terms of $Q(0,0)$ after a little work on the original functional equation~\eqref{eqn 17original}.

Notice that we can eliminate $Q(x,0)$ from equation~\eqref{eqn 17original} by setting $y = \frac{axt}{a-1}$. This yields
\begin{multline}
  abx K\left(x, \frac{axt}{a-1} \right) Q\left(x,\frac{axt}{a-1}\right)
  = \\
  x + a\left(bx - x - bt \right)Q\left(0,\frac{axt}{a-1}\right)
  - (a-1)(b-1)x Q(0,0).
\end{multline}
We now choose $x$ to set $K\left(x, \frac{axt}{a-1} \right)$ to zero. The solutions are precisely $x = \xi_\pm$.
Since $\xi_- = O(t)$ we set $x=\xi_-$ in \eqref{eqn 17fq3}. Notice that $\frac{a\xi_- t}{a-1} = 1/\xi_+$, and so we have (after a little manipulation)
\begin{align}
  Q(0,1/\xi_+) &= \frac{\xi_-}{a\left(b\xi_- - \xi_- - bt \right)} \left( (a-1)(b-1) Q(0,0) -1\right).
\end{align}
Substitute this back into equation~\eqref{eqn 17fq3}:
\begin{multline}
  Q(0,0) + \frac{C_2 \xi_-}{a\left(b\xi_- - \xi_- - bt \right)} \left( (a-1)(b-1) Q(0,0) -1\right)
  \\ = \left( 1 - Q(0,0)(a-1)(b-1)\right) \cdot [x^0] \left\{ H(x,Y(x);a,b) \right\}.
\end{multline}
Finally, we can isolate $Q(0,0)$:
\begin{align}
  Q(0,0) &= \frac{P(t;a,b)}{1 + (a-1)(b-1)P(t;a,b)},
\end{align}
where
\begin{align}
  P(t;a,b) &= \frac{C_2 \xi_-}{a\left((b-1)\xi_-  - bt \right)} - [x^0] \left\{ H(x,Y(x);a,b) \right\}.
\end{align}
Since $P(t;a,b)$ is D-finite, $Q(0,0)$ is, at worst, D-algebraic. Unfortunately this does not prove that $Q(0,0)$ is not D-finite.
Additionally, the series $P(t;a,b)$ is singular when $a=1$ or $b=1$.

\subsection{Aside to $b=1$}
While we cannot substitute $b=1$ directly into the above expression for $Q(0,0)$ we can recycle most of our workings.
Set $b=1$ into equation~\eqref{eqn 17fq2} and take the constant term:
\begin{align}
  Q(0,0) + [x^0]\left\{
  \frac {at \left( at-ax+x \right) }{ \left( {x}^{2}{t}^{2}{a}^{2}+t{a}^{2}-at-ax+x \right) x}
  Q(0,\olx ) \right\}
  & = [x^0] H(x,Y(x);a,1).
\end{align}
We can then compute the constant term on the left-hand side by very similar methods and demonstrate that
\begin{align}
  [x^0]\left\{
  \frac {at \left( at-ax+x \right) }{ \left( {x}^{2}{t}^{2}{a}^{2}+t{a}^{2}-at-ax+x \right) x}
  Q(0,\olx ) \right\}
  &= \frac{ \xi_+ }{at} \cdot \left. C_2 \right|_{b=1} \cdot Q(0,1/\xi_-).
\end{align}
Hence
\begin{align}
  Q(0,0)|_{b=1}
  & =  [x^0] H(x,Y(x);a,1) - \frac{\xi_+}{at}\cdot C_2|_{b=1}
\end{align}
and so is D-finite.

\section{Model 18} \label{sec soln 18}
We have been unable to solve this model for general $a,b$, however we have been able to solve it along the line $a=b$.
As remarked above, this model is very similar to that studied in \cite{tabbara2016}.
When $a=b$, the generating function is symmetric so that $Q(x,y) = Q(y,x)$, and we start by writing the functional equation and associated kernel:
\begin{align}
  \label{eqn 18original}
  abxy K(x,y) Q(x,y)
  &=
  \begin{multlined}[t]
  xy
  + ax\left(y(1-a) + at(x+1) \right)L(x) \\
  + ay\left(x(1-a) + at(y+1) \right)L(y)
  - (a-1)^2xy L(0)
\end{multlined}
\\
\text{and }  K(x,y) & = 1-t(x+\olx + y + \oly + x\oly + y\olx),
\end{align}
where we have written $Q(x,0) = L(x), Q(0,y)=L(y)$ and $Q(0,0) = L(0)$.

As noted in Table~\ref{tab 1718}, the kernel admits 6 symmetries which generate 5 new equations; we use those obtained by setting $(x,y)=(x,y), (\olx y,y)$ and $(\olx y,\olx)$.
An appropriate linear combination of these equations allows us to eliminate the unknown functions $L(y)$ and $L(\olx y)$.
We can then eliminate the bulk terms by  setting the kernel to zero by substituting $y = Y(t;x) \equiv Y(x)$ being the power series solution of $K(x,y)=0$:
\begin{align}
  Y(t;x) &\equiv Y(x) \nonumber \\
  &= \frac{x-t-tx^2-\sqrt{
\left( {x}^{4}-4{x}^{3}-6{x}^{2}-4x+1 \right) {t}^{2}-2x \left( {x}^{2}+1 \right) t+{x}^{2}
  }}{2t(1+x)} \\
  &= O(t).
\end{align}

This then gives
\begin{multline}\label{eqn 18fq1}
L(x) + {\frac { \left( Yat+ta-xa+x \right)  \left( xat+Yat-Yxa+Yx \right) }{\left( Yatx+xat-aY+Y \right)  \left( xat+Yat-a+1 \right) {x}^{2}}} L(\olx) \\
  =  H(x,Y(x);a)( 1- L(0)(a-1)^2 ),
\end{multline}
where $H$ is a somewhat complicated function that is rational in its arguments. To arrive at the solution we take the constant term of the above equation.

The constant term of $L(x)$ is simply $L(0)$. We can write the constant term of the right-hand side also in terms of $L(0)$:
\begin{align}
  ( 1- L(0)(a-1)^2 ) \cdot [x^0]\left\{ H(x,Y(x);a) \right\}.
\end{align}
The remaining term is more challenging.

As has been the case in many of the models we have discussed, one can show, with a little work, that the coefficient of $L(\olx)$ is actually a rational function of $t,x,a$:
\begin{align}
  {\frac { \left( a-1 \right)  \left( ta+a-1 \right) {x}^{2}- \left( a-1 \right)  \left( 2ta+1 \right) x+ta \left( 2ta+1 \right) }
 {x
 \left( ta \left( 2ta+1 \right) {x}^{2}- \left( a-1 \right)  \left(
2ta+1 \right) x+ \left( a-1 \right)  \left( ta+a-1 \right)  \right)
}}.
\end{align}
We are able to write it in a partial fraction form:
\begin{align}\label{eqn mod18 pf}
  C_0 + \frac{C_1}{x} + \frac{C_2}{1-x/\xi_+} + \frac{C_3}{1-\olx \xi_- },
\end{align}
where the $\xi_\pm$ are solutions of the denominator with respect to $x$:
\begin{align}
  \xi_+ &= {\frac {2t{a}^{2}-2ta+a-1+\sqrt {- \left( a-1 \right)
 \left( 2ta+1 \right)  \left( 4{t}^{2}{a}^{2}+2t{a}^{2}-2ta-a+
1 \right) }}{2ta \left( 2ta+1 \right) }} \\
& = O(t^{-1}); \nonumber \\
\xi_- &= {\frac {2t{a}^{2}-2ta+a-1-\sqrt {- \left( a-1 \right)
\left( 2ta+1 \right)  \left( 4{t}^{2}{a}^{2}+2t{a}^{2}-2ta-a+
1 \right) }}{2ta \left( 2ta+1 \right) }} \\
& = O(1). \nonumber
\end{align}
As was the case in Model 17, we have chosen this combination of $x,\olx, \xi_\pm$ so that the expression~\eqref{eqn mod18 pf} is a power series in $t$.
The coefficient $C_1$ is a simple rational function of $a,t$, while the other coefficients are algebraic. Note also that $C_0+C_3=0$.
Using this partial fraction form we can compute the constant term as
\begin{align}
  [x^0]\left\{ \left(   C_0 + \frac{C_1}{x} + \frac{C_2}{1-x/\xi_+} + \frac{C_3}{1-\olx \xi_- }
 \right)L(\olx) \right\}
 &= C_2 \cdot L(1/\xi_+).
\end{align}

Thankfully we can express $L(1/\xi_+)$ in terms of $L(0)$.
Return to equation~\eqref{eqn 18original}, and notice that we can eliminate $L(y)$ by setting $x=\frac{ta(1+y)}{a-1}$.
We can eliminate the kernel by setting $y=Y'(t;a)$ being the power series solution of
$K\left( \frac{ta(1+y)}{a-1}, y \right)=0$:
\begin{align}
  Y'(t;a) &=
  {a-1-\frac {2{t}^{2}{a}^{2}-\sqrt {- \left( a-1 \right)  \left( 2
  ta+1 \right)  \left( 4{t}^{2}{a}^{2}+2t{a}^{2}-2ta-a+1
   \right) }}{2ta \left( ta+a-1 \right) }}.
\end{align}
The combination
\begin{align}
  \frac{ta(1+Y')}{a-1} &= 1/\xi_+,
\end{align}
hence we are left with an equation linking $L(1/\xi+)$ with $L(0)$:
\begin{align}
  L(1/\xi_+) &= \frac{Y'(a-1)}{a(at+a-1)(atY'+at-aY'+Y')}\left(1-(a-1)^2 L(0) \right).
\end{align}
So putting together all the parts from the constant term we get the equation
\begin{multline}
  L(0) + C_2  \cdot \frac{Y'(a-1)}{a(at+a-1)(atY'+at-aY'+Y')}\left(1-(a-1)^2 L(0) \right) =\\
  ( 1- L(0)(a-1)^2 ) \cdot [x^0] \left\{ H(x,Y(x);a) \right\}.
\end{multline}
Isolating $L(0)$ gives
\begin{equation}
  L(0) = \begin{multlined}[t] \frac{1}{(a-1)^2}  \\ - \frac{1}{(a-1)^2 \left(1 -  \frac{C_2 \cdot Y'(a-1)}{a(at+a-1)(atY'+at-aY'+Y')} - (a-1)^2\cdot[x^0]\left\{ H(x,Y(x);a) \right\} \right)}.\end{multlined}
\end{equation}
Hence we have shown that $L(0) = Q(0,0) = G(t;a,a)$ can be written as (essentially) the reciprocal of a D-finite function. Therefore it is, at worst, D-algebraic.

\section{Model 19 --- Interacting Kreweras walks}
\label{sec soln 19}
We have so far been unable to solve this model for general $(a,b)$; however, we have solved it along the line $(a,a)$.
Our solution follows, essentially, Bousquet-M\'elou's \emph{algebraic kernel method} solution \cite{bousquet2005} but with additional complications.
The functional equation when $b=a$ and kernel are:
\begin{align}
  a^2 xy K \cdot Q(x,y) &= 1 + \left(ax-x-at\right) ayL(y)
  + \left(ay-y-at\right) axL(x)
  - \left(a-1 \right)^2 xy L(0)\\
  \text{and } K &= 1 - t(\olx + \oly + xy)\;,
\end{align}
where we have written $L(x) = Q(x,0) = Q(0,x)$.

Following the method in \cite{bousquet2005}, we use the kernel symmetries $(x,y)\mapsto(\olx \oly ,y), (x,\olx \oly )$ to generate two new equations.
A little linear algebra allows us to remove all of the $L(\olx \oly )$ terms and then we divide by the kernel:
\begin{multline}
 ( axyt-a+1) Q( x,y ) +
 ( at-ay+y ) \oly  Q(x,\olx \oly  ) -
  ( at-ax+x) \olx  Q(\olx \oly ,y) =
\\
\frac {a{x}^{2}{y}^{2}t-axy+axy-ayt+xy}{xy a^2 K}
-2 \frac{\left( at-ay +y \right)  \left( axyt - a + 1 \right) }{ay K} \cdot L \left( x \right)
\\
-\frac { \left( a-1 \right) ^{2}
 \left( a{x}^{2}{y}^{2}t-axy+axt-ayt+xy \right)  }{xy a^2 K} \cdot L \left( 0 \right).
\end{multline}

This is quite a bit messier than the $(a,b)=(1,1)$ case, but we can still proceed in the same manner; our next step is to take the constant term with respect to $y$ of both sides.
We do each side in turn.
\subsection{Constant term of the left-hand side}
Consider each of the 3 terms in turn.
\begin{itemize}
 \item The first term is easy to compute --- the constant term is
 \begin{align}
 [y^0] \left\{ (axyt-a+1) Q(x,y) \right\} &= (1-a) L(x).
\end{align}
\item The second term is a little messier, but again gives just
\begin{align}
[y^0] \left\{
(at-ay+y)\oly  Q( x, \olx \oly )
\right\}
&= (1-a)L(x).
\end{align}
\item The third is a little more complicated and we write its constant term in terms of the diagonal of $Q$:
\begin{align}
[y^0]\left\{
(ax-at-x)\olx  Q(\olx \oly ,y)
\right\}
&= (a-1 - at\olx  ) Q_d(\olx ),
\end{align}
where $Q_d(x) = [q^0] \left\{ Q(xq,\overline{q}) \right\}$ is the diagonal of $Q(x,y)$.
\end{itemize}
So the constant term of the left-hand side is
\begin{align}
  [y^0]\{\mathrm{LHS}\} &= 2(1-a)L(x) + (a-1-at\olx ) Q_d(\olx ).
\end{align}

\subsection{The constant term of the right-hand side}
The constant term of this side of the equation is more involved. We first note that the right-hand side can be written as
\begin{align}
  \mathrm{RHS} &= C_1 + \frac{C_2}{K},
\end{align}
where both $C_1,C_2$ are independent of $y$:
\begin{align}
  C_1 &= \frac{(a-1)^2}{a} L(0) + 2(1-a)L(x) - \frac{1}{a} ;\\
  C_2 &=
  {\frac { \left( a-1 \right) ^{2} \left( 2\,ta-x \right) L \left( 0
 \right) }{x{a}^{2}}}-2\,{\frac { \left( {a}^{2}{t}^{2}{x}^{2}+{a}^{2}
t-ta-xa+x \right) L \left( x \right) }{xa}}-{\frac {2\,ta-x}{x{a}^{2}}
}.
\end{align}
The constant term of $C_1$ is straightforward --- it is just $C_1$, but the constant term of $C_2/K$ requires some work.

It suffices to consider the constant term $[y^0]\left\{ \dfrac{1}{K} \right\}$. As for Model 17 we compute a partial fraction decomposition
\begin{align}
  \frac{1}{K} &= A_1 + \frac{A_2}{1-y/\gamma_+} + \frac{A_3}{1- \oly \gamma_- },
\end{align}
where $\gamma_\pm$ are $y$-solutions of the kernel:
\begin{align}
  \gamma_+ &= \frac{x-t+\sqrt{(x-t)^2-4t^2x^3}}{2tx^2} = \olx  t^{-1} + O(1); \\
  \gamma_- &= \frac{x-t-\sqrt{(x-t)^2-4t^2x^3}}{2tx^2} = t + O(t^2).
\end{align}
The constant term with respect to $y$ is $A_1+A_2+A_3$ and simplifies to
\begin{align}
  [y^0] \left\{ \frac{1}{K} \right \}
  &= A_1+A_2+ A_3 = \frac{1}{\sqrt{\Delta}},
  & \text{where } \Delta = (1-t\olx)^2-4xt^2.
\end{align}
So finally, the constant term of the right-hand side is just
\begin{align}
  [y^0] \{\mathrm{RHS} \} &= C_1 + \frac{C_2}{\sqrt{\Delta}}.
\end{align}

\subsection{Reassembly and another constant term}
Equating the constant terms of the two sides and cleaning up gives us
\begin{align}
   (a-1-at\olx ) Q_d(\olx ) - \frac{(a-1)^2}{a} L(0) + \frac{1}{a}
   &= \frac{C_2}{\sqrt{\Delta}}.
\end{align}
Notice that on the left-hand side of the above equation we only have non-positive powers of $x$.
We can further separate the powers of $x$ by careful factorisation of $\Delta$ into three series --- again following \cite{bousquet2005}
\begin{align}
  \Delta &= (1- t\olx )^2 - 4xt^3 = (1-x \delta_1)(1-\olx  \delta_2)(1-\olx  \delta_3),
  \intertext{where the roots $\delta_i$ are readily computed:}
  \delta_1 &= \frac{1}{4t^2}-2t-12t^4+\cdots \;;\\
  \delta_2 &= t + 2t^{5/2}+6t^4+ \cdots \;;\\
  \delta_3 &= t - 2t^{5/2}+6t^4 - \cdots \;.
\end{align}
Then write
\begin{align}
  \Delta_+ &= (1-x/\delta_1) = 1-4xt^2 -32xt^5 + \cdots \;,\\
  \Delta_- &= (1-\olx \delta_2)(1-\olx \delta_3) = 1-2\olx  t + \olx ^2 t^2 +\cdots \\
\text{and }  \Delta_0 &= 4t^2\delta_1 = 1-8t^3-48t^6+\cdots \;.
\end{align}

By multiplying through $\Delta_-$ we almost separate the equation into terms that contain strictly
negative powers of $x$ and terms that contain strictly positive powers of $x$:
\begin{multline}
\sqrt{\Delta_-}\left( \frac{(a-1)^2}{a} L(0) + \left( ta-xa+x \right) \olx  Q_d(\olx ) + \frac{1}{a} \right)
= \\
\frac{1}{\sqrt{\Delta_0 \Delta_+}} \left(
\frac { \left( a-1 \right) ^{2} \left( 2\,ta-x \right) L \left( 0 \right) }{x{a}^{2}}
-2 \frac { \left( {a}^{2}{t}^{2}{x}^{2}+{a}^{2}t-ta-xa+x \right) L \left( x \right) }{xa}
- \frac {2ta-x}{x{a}^{2}}
\right).
\end{multline}
Notice that the left-hand side now contains only powers of $x^0, x^{-1}, x^{-2},\dots$ while the right-hand side contains only powers of $x^{-1}, x^0, x^1, x^2, \dots$.
We can complete the separation of powers by moving the coefficient of $x^0$ from the left-hand side to the right, and moving the coefficient of $x^{-1}$ from the right-hand side to the left.

The constant term of the left-hand side is
\begin{align}
  [x^0]\{\mathrm{LHS}\} &= \frac{1}{a} + \frac{a-1}{a}L(0)
\end{align}
and the coefficient of $x^{-1}$ on the right-hand side is
\begin{align}
[x^{-1}]\{ \mathrm{RHS}\} &= \frac{2t((a-1)L(0) +1)}{\sqrt{\Delta_0}}.
\end{align}
Moving these terms to either side we can completely separate the equation into strictly negative powers of $x$ on the left-hand side, and non-negative powers of $x$ on the right-hand side.
This then gives two new equations (obtained by extracting non-negative powers of $x$ and strictly negative powers of $x$):
\begin{align}
  \sqrt{\Delta_-}\left( \frac{(a-1)^2}{a} L(0) + \left( ta-xa+x \right) \olx  Q_d(\olx ) + \frac{1}{a} \right)
  - \frac{2t( (a-1)L(0)-1)}{x\sqrt{\Delta_0}} &=0\;;
\end{align}
\begin{multline}\label{eqn krew nearly}
  \frac{1}{a} + \frac{a-1}{a}L(0) - \frac{2t( (a-1)L(0)-1)}{x\sqrt{\Delta_0}} \\
  -\frac{1}{\sqrt{\Delta_0 \Delta_+}} \left(
  \frac { \left( a-1 \right) ^{2} \left( 2\,ta-x \right) L \left( 0 \right) }{x{a}^{2}}
  -2 \frac { \left( {a}^{2}{t}^{2}{x}^{2}+{a}^{2}t-ta-xa+x \right) L \left( x \right) }{xa}
  - \frac {2ta-x}{x{a}^{2}}
  \right) \\ =0.
\end{multline}

The second of these is readily solved using the single-variable (ie ``classical'') kernel method.
Notice the coefficient of $L(x)$ is a polynomial in $x$. It has two $x$-solutions:
\begin{align}
  \xi_+ &=  \frac{a-1+\sqrt{(a-1)(1-a+4a^3t^3)}}{2a^2t^2} = \frac{a-1}{a^2} t^{-2} + O(t)\;;\\
  \xi_- &=  \frac{a-1-\sqrt{(a-1)(1-a+4a^3t^3)}}{2a^2t^2} = at + O(t^4).
\end{align}
Setting $x = \xi_-$ in equation~\eqref{eqn krew nearly} leaves an equation in only $L(0)$:
\begin{align}
  (1-a)L(0) &= \frac{a(2at-\xi_-)}{(2at-\xi)(1-a) + (\xi_-\sqrt{\Delta_0} - 2t)a\sqrt{\Delta_+(\xi_-)} },
\end{align}
where $\Delta_+(\xi_-)$ denotes substituting $x\mapsto \xi_-$ in $\Delta_+$.
Hence $L(0) = Q(0,0)$ is an algebraic function of $t$.
In fact, it satisfies a degree 6 polynomial.

\section{Discussion}\label{sec:discussion}
The results we have proved are summarised in Theorems~\ref{thm 1234} to~\ref{thm 22}.
We now summarise our unproven observations from Tables~\ref{tab 1to4} through~\ref{tab 2223} as three conjectures and one question.

While have found closed form expressions for $G(t;a,b)$ for several models as the ratio or reciprocal of D-finite functions --- suggesting a D-algebraic solution --- we are unable to prove that these are, in fact, not D-finite.
Additionally, we have not been able to guess D-finite solutions for these models based on analysis of long series.
This leads to the following conjecture.
\begin{conj}\label{conj 134171822}
  The generating functions $G(t;a,a)$ and $G(t;a,b)$ for Models 1, 3, 4, 17, 18 and 22 are not D-finite.
\end{conj}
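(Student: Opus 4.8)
The plan is to exploit the explicit structure produced by the derivations above: for each of Models 1, 3, 4, 17, 18 and 22, the computation expresses $G$ as a rational function of $t$ plus (a rational multiple of) the reciprocal of a single D-finite series, $G = c \pm 1/D$. Here $c$ is rational in $t,a,b$, and $D$ is D-finite, being built from a constant-term series $C$ (which is D-finite, and which we expect to be transcendental) together with algebraic functions such as $P$. This is exactly the shape of the Model 1 formula for $Q(0,0)$, and of the $\frac{P}{1+(a-1)(b-1)P}$ and reciprocal-type formulas for Models 17 and 18; the remaining cases are entirely analogous. Since the D-finite series form a ring containing all rational functions \cite{lipshitz1989}, $G$ is D-finite if and only if $1/D$ is. The key tool is then the classical theorem of Harris and Sibuya: for a nonzero D-finite function $D$, the reciprocal $1/D$ is again D-finite if and only if the logarithmic derivative $D'/D$ is algebraic. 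Thus the entire conjecture reduces to one uniform assertion, namely that $D'/D$ is not algebraic in each model.

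To prove that $D'/D$ is not algebraic I would analyse the dominant singularity of $D$. If $D'/D$ were algebraic, then $D$ would satisfy a first-order linear differential equation with algebraic coefficients, and hence at every finite singular point $t_*$ its local expansion would be of the purely power-like form $(t-t_*)^{\lambda}\cdot(\text{algebraic})$, with no logarithmic part. I would therefore aim to exhibit a genuine logarithmic singularity of $D$: going once around $t_*$ sends $\log(t-t_*)\mapsto\log(t-t_*)+2\pi i$, under which $D'/D$ transforms as a non-constant M\"obius function of $\log$, whose orbit under monodromy is infinite, so $D'/D$ cannot be algebraic. Because the algebraic ingredients (for example $P$ and the polynomial prefactors) contribute only algebraic branch points and no logarithms, it suffices to locate a logarithmic singularity in the transcendental building block $C$ and to verify that its coefficient does not vanish when $C$ is assembled into $D$. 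For Model 1 the latter check is immediate, as $C$ enters $D$ with the nonzero factor $(a-1)^2(b-1)^2$ and the algebraic term $P$ cannot cancel a logarithm.

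The main obstacle is precisely this singularity analysis of $C$. Establishing that the dominant singularity of the constant-term series $C=[x^0]\{\cdots\}$ is logarithmic, rather than a pure power law, requires either computing the minimal linear ODE satisfied by $C$ and verifying a resonance — an integer difference of local exponents — that forces a logarithm at the relevant singular point, or else performing a direct contour / saddle-point estimate of the Cauchy integral defining $C$, in the spirit of the singularity analysis of \cite{flajolet2009}. Either route must be carried out uniformly in the weights $a$ and $b$ (and specialised along $a=b$), and one must additionally exclude the degenerate parameter values at which the logarithm is absent or cancels. It is this parameter-dependent control of the exact local exponents, for differential equations of large order and degree, that we have been unable to push through; this is why the statement is recorded here as a conjecture rather than a theorem.
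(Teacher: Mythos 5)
The statement you are addressing is recorded in the paper as a \emph{conjecture}, not a theorem: the authors explicitly say they ``are unable to prove that these are, in fact, not D-finite,'' and the only support they offer is the shape of the closed forms (ratios or reciprocals of D-finite series) together with the failure of series-based guessing with the Ore Algebra package. So there is no proof in the paper to compare against, and your proposal must be judged on whether it closes the gap on its own. It does not, and you concede as much in your final paragraph.

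That said, your reduction is correct and goes beyond what the paper records. For Models 1, 17 and 18 the paper's formulas really are of the form $G = c \pm r/D$ with $c,r$ rational in $t,a,b$ and $D$ D-finite, and then $G$ is D-finite if and only if $1/D$ is; the Harris--Sibuya theorem (together with its easy converse, that $D'/D$ algebraic forces $1/D$ to satisfy a first-order linear ODE with algebraic coefficients) correctly converts this into the single assertion that $D'/D$ is not algebraic, and your monodromy argument --- a genuine logarithmic singularity of $D$ makes $D'/D$ a non-constant M\"obius function of $\log(t-t_*)$ with infinite monodromy orbit --- is sound in outline. But the entire content of the conjecture is now concentrated in the claim that $D$ (equivalently the constant-term series $C$, since the algebraic pieces $P$ and the rational prefactors can neither create nor cancel logarithms) actually possesses a logarithmic singularity at the relevant parameter values, and this is never established: you do not locate the singular point, compute local exponents, verify a resonance, or carry out the saddle-point estimate for any of the six models, even at a single non-trivial $(a,b)$. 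Your remark that for Model 1 ``the latter check is immediate'' presupposes exactly this unproved fact about $C$. Two smaller issues: (i) for Models 3, 4 and 22 the paper only asserts $G$ is a ``ratio of two D-finite functions,'' and your reduction needs the stronger rational-numerator form $c\pm r/D$, which you assert by analogy rather than verify; (ii) the M\"obius-of-$\log$ function must be checked to be non-constant (a non-degeneracy condition on the coefficients), which you elide. In sum, what you have is a correct and potentially useful reduction of the conjecture to a different open analytic statement, not a proof; the statement remains a conjecture, consistent with its status in the paper.
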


We were able to find D-finite solutions for models possessing a symmetry across a vertical line, but only when $b=1$.
We have been unable to find or guess solutions for any other values of $a$ and $b$. On the basis of this we conjecture the following:
\begin{conj}\label{conj 5to16}
  The generating functions $G(t;a,b)$ for Models 5--16 are not D-finite except when $b=1$.
\end{conj}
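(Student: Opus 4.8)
The plan is to split the statement into its two halves. The case $b=1$ is already settled: Theorem~\ref{thm 5to16} exhibits $G(t;a,1)$ as an explicit constant term of a D-finite function, so there is nothing left to do there. The real content is to prove non-D-finiteness for $b\neq 1$, and here I would begin exactly where the solvability breaks down in Section~\ref{sec soln 5to16}. Starting from~\eqref{eqn:simplified_general_fe} with $b\neq 1$, I would perform the half-orbit sum over the horizontal-reflection subgroup $\{(x,y),(\olx,y)\}$ to eliminate $Q(0,y)$, then cancel the kernel by setting $y=Y(x)$. This lands on a relation of the schematic form
\[
Q(x,0) + \kappa(x)\,Q(\olx,0) = \Phi(x)\bigl(1-(a-1)(b-1)Q(0,0)\bigr),
\]
with $\kappa,\Phi$ built from $Y(x)$ and the step data. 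The decisive feature is that, whereas at $b=1$ the coefficient $\kappa$ collapses to the monomial $-\olx^2$ (which is what makes the constant-term extraction close up into a single D-finite object), for $b\neq 1$ the second group generator is a non-affine birational involution $(x,\rho(x)\oly)$ and $\kappa$ is a genuinely algebraic, non-rational function of $x$. This is precisely the mechanism behind the authors' inability to solve, and it is also the feature I would try to convert into a proof of non-D-finiteness.

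I would pursue two complementary routes. The first is elementary but delicate: iterating the constant-term extraction no longer terminates, because $\kappa$ cannot be written as a finite sum of geometric series in $x$; instead each pass introduces a new evaluation $Q(0,\xi_k)$ at a point $\xi_k$ obtained from the previous one by a fixed birational map determined by the second generator together with the weight $b$. If one can show this forward orbit $\{\xi_k\}$ is infinite and contributes distinct singularities, then $G(t;a,b)$ inherits infinitely many singularities in the $t$-plane, which is incompatible with the singularity structure of D-finite series~\cite{flajolet2009}. The second, heavier but more robust, route is to apply the difference-Galois machinery of~\cite{dreyfus2018,raschel2012}: after analytic continuation along the kernel curve, D-finiteness of $G$ is equivalent to a residue (telescoping) condition on an explicit differential form attached to the orbit data, and I would aim to show that this condition is satisfied if and only if $b=1$, the factor $b-1$ appearing as the obstruction to exactness.

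The hard part is the same in both approaches, and it is exactly what the authors could not do: the abstract group of the walk is finite (order $4$), so the usual shortcut ``infinite group $\Rightarrow$ non-D-finite''~\cite{fayolle1999random} is unavailable. One must instead show that the boundary weight $b\neq 1$ destroys the \emph{effective} finiteness --- that the induced transformation on the evaluation variable has infinite order, or equivalently that the telescoper does not exist --- and, crucially, rule out the ``miraculous'' cancellations that could restore a finite singular set. Because Models 5--16 occur in symmetric pairs and share their group structure, I would first carry out the full argument for a single representative (say Model~5, with $\kappa$ governed by $\rho(x)=1/(x+\olx)$), establish the infinite-orbit and telescoper obstruction there, and then transport the conclusion to the remaining models by their common kernel symmetry.
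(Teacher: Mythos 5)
The statement you are addressing is a \emph{conjecture} in the paper, not a theorem: the authors' only evidence is the solved case $b=1$ (Theorem~\ref{thm 5to16}) together with their inability to solve, or even to guess from long series, any solution with $b\neq 1$. So your attempt must stand entirely on its own, and it does not: what you have written is a research program whose decisive steps are all deferred, as you yourself concede (``the hard part \dots is exactly what the authors could not do''). In your first route, nothing connects the evaluations $Q(0,\xi_k)$ produced by iterating the constant-term extraction to actual singularities of $G(t;a,b)$ in the $t$-plane. Note that in this method the substitution points depend on $t$ as well as on the weights (compare $x\mapsto tb/(b-1)$ in Section~\ref{sssec:nsew_general_ab_direct}), each iteration produces an evaluation of an \emph{unknown} series rather than an explicit singular factor, and you would still have to exclude the cancellations you explicitly set aside; ``infinitely many orbit points'' does not by itself yield ``infinitely many singularities''. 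In your second route, the claimed equivalence between D-finiteness and a residue/telescoping condition, and the claim that this condition fails precisely when $b\neq 1$, are asserted without any computation; the machinery of \cite{dreyfus2018,raschel2012} is built for the unweighted problem, where the analytic object is attached to the kernel curve, and it is not established that it extends to the interacting case, where the weights $a,b$ deform only the inhomogeneous boundary terms while leaving the kernel $K(x,y)$ untouched.

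That last point is also a structural caveat for your whole strategy. The group of the walk for Models 5--16 is finite (order 4) and independent of $a,b$, so there is no ``induced transformation of infinite order'' living on the kernel; any genuine obstruction must be located in the boundary data. A correct argument would in particular have to explain why the same finite group and the same kernel geometry \emph{do} produce D-finite answers at $b=1$ for all $a$ (Section~\ref{sec soln 5to16}) and fail otherwise, with the factor $b-1$ as the exact switch. Your diagnosis of the mechanism of failure --- that the coefficient of $Q(\olx,0)$ after the half-orbit sum and kernel cancellation is algebraic rather than rational when $b\neq 1$ --- does match the authors' own post-mortem in Section~\ref{sec:discussion}, so your starting point is sound; but until one of your two routes is carried through in full for even a single representative such as Model 5, you have reproduced the paper's heuristic for \emph{why the method fails}, not a proof that \emph{the generating function is not D-finite}. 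Those are very different statements: insolubility by this particular kernel method is not evidence that can be converted, as it stands, into transcendence over the D-finite class.
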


The generating function $G(t;a,b)$ is equal in Models 19 and 20 and we have shown that it is algebraic when $a=b$. We have not solved the model at more general
values of $a$ and $b$, however we have managed to guess algebraic solutions at various integer values of $a$ and $b$ and based on this we conjecture:
\begin{conj}\label{conj kreweras}
  The generating functions $G(t;a,b)$ for Models 19 and 20, Kreweras and reverse Kreweras walks, are algebraic for all $a,b$.
\end{conj}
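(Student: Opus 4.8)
The plan is to extend the \emph{algebraic kernel method} of Bousquet-M\'elou \cite{bousquet2005}, which is used for the symmetric slice $G(t;a,a)$ in Section~\ref{sec soln 19}, to the full two-weight setting. Since Models 19 and 20 are reverses of one another, $G(t;a,b)$ is the same for both (as in the proof of Theorem~\ref{thm 192021}), so it suffices to treat Model 19. Instantiating \eqref{eqn:simplified_general_fe} with $S(x,y)=\olx+\oly+xy$, $A_{-1}(x)=B_{-1}(y)=1$ and $\epsilon=0$ gives a functional equation whose unknowns are $Q(x,y)$, the two boundary sections $Q(x,0)$ and $Q(0,y)$, and $Q(0,0)$. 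The kernel $K=1-t(\olx+\oly+xy)$ is invariant under the order-$6$ group generated by $\Phi\colon(x,y)\mapsto(\olx\oly,y)$ and $\Psi\colon(x,y)\mapsto(x,\olx\oly)$, with orbit $\{(x,y),(\olx\oly,y),(x,\olx\oly),(\olx\oly,x),(y,\olx\oly),(y,x)\}$.

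First I would follow Section~\ref{sec soln 19}: apply $\Phi$ and $\Psi$ to the functional equation to generate new equations, and take the linear combination that removes the doubly-transformed sections. In the symmetric case the identification $Q(x,0)=Q(0,x)=L$ reduces everything to a single one-variable unknown; for $a\neq b$ the two sections $Q(x,0)$ and $Q(0,y)$ remain genuinely distinct, and applying the symmetries also produces $Q(\olx\oly,0)$ and $Q(0,\olx\oly)$. The first task is therefore to show that the six orbit equations supply enough independent relations to eliminate the transformed sections while retaining only $Q(x,0)$, $Q(0,y)$ and $Q(0,0)$. I would then divide by $K$ and substitute the power-series root $y=Y(t;x)$ to cancel the bulk term, arriving at an identity relating $Q(x,0)$, $Q(0,y)$ and $Q(0,0)$ with coefficients rational in $x,t,a,b$.

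Next, mirroring Section~\ref{sec soln 19}, I would extract the constant term with respect to $y$, introducing the diagonal section $Q_d$, and factor the discriminant $\Delta$ into the series $\Delta_+,\Delta_-,\Delta_0$ in order to separate strictly negative from non-negative powers of $x$. The key new feature is that both boundary unknowns survive this separation, so a single pass of the classical kernel method no longer closes the problem. I would therefore carry out the analogous extraction in $x$ as well, and then substitute the two small kernel roots --- $x=\xi_-=at+O(t^4)$ to annihilate $Q(x,0)$, and the symmetric root $y=\eta_-$ to annihilate $Q(0,y)$ --- to obtain a finite linear system in $Q(0,0)$ and a bounded set of algebraic evaluations $Q(\xi_-,0)$, $Q(0,\eta_-)$ and diagonal values. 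Solving this system for $G(t;a,b)=Q(0,0)$ should present it as an algebraic function; the guessed degree $12$ --- against degree $6$ for $G(t;a,a)$, $G(t;a,1)$ and $G(t;1,b)$ --- is exactly what one expects once the $x\leftrightarrow y$ symmetry is broken and the number of boundary unknowns doubles.

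The main obstacle is this loss of symmetry. In the $a=b$ case the collapse to a single section $L$ is precisely what makes the discriminant separation yield one kernel-method equation that solves immediately; with two independent sections one must verify that the orbit relations together with the constant-term extractions in both $x$ and $y$ furnish enough independent equations, and that the resulting linear system for $Q(0,0)$ is non-degenerate (its determinant not identically zero in $t,a,b$). Checking that the cross terms between $Q(x,0)$ and $Q(0,y)$ still separate cleanly into positive and negative powers of $x$ under the factorization of $\Delta$ is where the genuine difficulty lies, and is presumably why only a guessed solution is currently available. Should this direct route prove unwieldy, an alternative is the invariant and decoupling approach to quadrant walks \cite{raschel2012}: one would look for rational invariants of the kernel curve together with a decoupling of the weighted boundary terms, from which algebraicity of $G(t;a,b)$ would follow without explicitly solving the orbit system.
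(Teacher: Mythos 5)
First, be aware that the statement you set out to prove is a \emph{conjecture} in the paper, not a theorem: the authors prove algebraicity only on the diagonal $a=b$ (Theorem~\ref{thm 192021}, via the half-orbit computation of Section~\ref{sec soln 19}), explicitly state that they were unable to solve Models 19 and 20 once the $x=y$ symmetry is broken, and support the general $(a,b)$ claim only by guessed degree-6 and degree-12 polynomials obtained from series data. So there is no paper proof to compare against; your argument has to stand on its own, and as written it is a research programme rather than a proof. Its two pivotal steps are left unverified, and they are precisely the steps the authors identify as the obstruction.

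Concretely, gap one is your elimination step. When $a\neq b$ the six orbit equations involve six distinct unknown sections, $Q(u,0)$ and $Q(0,v)$ with $u,v\in\{x,y,\olx\oly\}$, rather than the three that survive the symmetric collapse $Q(x,0)=Q(0,x)=L(x)$. You assert the orbit "supplies enough independent relations" to eliminate the four transformed sections while keeping a nontrivial relation in $Q(x,0)$, $Q(0,y)$, $Q(0,0)$, but genericity cannot be invoked: for Kreweras the signed full-orbit sum vanishes identically (this degeneracy is exactly why \cite{bousquet2005} needs the algebraic kernel method at all), so the rank of this linear system must be established by explicit computation, and nothing in your sketch does so. Gap two: even granting such a relation, the constant-term machinery of Section~\ref{sec soln 19} does not obviously close. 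In the symmetric case every extracted constant term reduces either to the diagonal $Q_d$ or to an evaluation of $L$ at an algebraic series, because the key coefficients are rational in the relevant variable (or admit the special partial-fraction form in $x$, $\olx$, $\xi_\pm$). With two genuinely independent sections, the coefficient attached to the second section after substituting a kernel root is in general algebraic rather than rational, and then, as the paper emphasises in Section~\ref{sec:discussion}, the required constant term can no longer be expressed as a simple substitution --- your "bounded set of algebraic evaluations $Q(\xi_-,0)$, $Q(0,\eta_-)$ and diagonal values" is exactly the unjustified claim. The non-degeneracy of your final linear system is likewise only hoped for. (A minor slip: the invariant/decoupling method you mention as a fallback is the Tutte-invariant approach of \cite{bernadicounting}, not \cite{raschel2012}; the paper flags it as untested future work, so it cannot be leaned on either.) In short, your proposal retraces the natural attack --- evidently the same one the authors attempted --- and halts at the same two walls, so the conjecture remains open.
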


We have had far less success with Models 21 and 23 and have not been able to solve them at any non-trivial values of $a$ and $b$.
We were able to guess D-finite solutions to Model 21 at $a=1$ or $b=1$, but we have not been able to show that those solutions are transcendental.
We have not been able to guess algebraic solutions at those values. For Model 23 we were able to guess an algebraic solution when $b=1$ and a D-finite solution at $a=1$.
Similar to Model 21, we have not been able to prove transcendence of the D-finite solution at $a=1$, nor have we been able to guess an algebraic solution there.
Based on this we ask the question:
\begin{quest}
  Are the generating functions $G(t;a,1), G(t;1,b)$ for Model 21, double Kreweras walks, and $G(t;1,b)$ for Model 23, Gessel walks,  D-finite and not algebraic?

  If so, then the generating functions $G(t;a,b)$ for these models are not algebraic in contrast with $G(t;1,1)$.
\end{quest}

Some comments regarding solvability and symmetry are worthwhile.
Introducing different weights, $a$ and $b$, on the boundary breaks the diagonal symmetry of the quarter plane.
Models that are both horizontally and vertically symmetric (ie.~Models 1--4), remain soluble even though the analytic nature of the generating function changes --- see Conjecture~\ref{conj 134171822}.
The other problem that behaves in the same way is Model 17, which possesses a symmetry along the line $y=-x$. Breaking symmetry along the line $y=x$ does not seem to change its solubility.

On the other hand, Models 5--16 only have symmetry around a vertical line and we have only been able to solve these problems when there is no weight present on the vertical boundary.
As soon as the vertical boundary is weighted, the broken symmetry renders the problem more difficult and we have been unable to either solve or guess the generating function --- see Conjecture~\ref{conj 5to16}.

We have been unable to solve Models 18, 19, 20 and 21 when $a \neq b$.
These four models possess symmetry along the line $y=x$ (but not horizontal or vertical symmetry), and unsurprisingly our solution method, which relies on this symmetry, fails in these cases.
In spite of this, we have strong numerical evidence that $G(t;a,b)$ is algebraic for Models 19 and 20 --- see Conjecture~\ref{conj kreweras}.

We finish by making a few comments about the impediments to using the variations of the kernel method to the problems above.
Arguably, these difficulties fall into three categories.
First, and perhaps most prosaic, is that when we introduce boundary weights, the coefficients of equations become more complicated.
While, in principle, this does not prevent a solution, in practice it does.
For example, we believe that Model 21 might be soluble via a similar approach to that used for Models 19 and 20, however the required manipulations quickly become byzantine.

Perhaps more substantive is that for a great number of the models we have not solved --- including all of 5--16 --- a key coefficient is no longer rational.
More specifically, if we attempt to replicate the approach in Section~\ref{sec soln 134}, the coefficient of $Q(\olx,0)$ in the equivalent of equation~\eqref{eqn:nsew_fe_after_halforbit_cancelkernel} is algebraic and not rational.
Because of this, we are not able to express the required constant term as a simple substitution. Indeed, it is not clear that a simple closed form for the required constant term exists.

Finally, we have attempted to use the so-called full orbit sum kernel method, in which one sums over all the symmetries of the kernel in order eliminate boundary terms --- see Proposition~8 from \cite{bousquet-melou_walks_2010}.
Unfortunately, it does not appear to be possible to eliminate sufficient boundary terms.
For example, in Model 5, a sum over the 4 kernel symmetries allows one to remove most, but not all, of the boundary terms.
For general $a,b$, one is left with an equation of the form
\begin{align}
  \Big[ \text{ bulk terms }\Big] K &= \Big[ \text{ coefficient } \Big] Q(0,0) + \Big[ \text{ coefficient } \Big] Q(x,0)
\end{align}
and it does not appear that any improvement is possible. It may be possible to follow the methods used to solve Kreweras walks (see \cite{bousquet2005} and Section~\ref{sec soln 19}) to further process this equation towards a solution.
This is the subject of ongoing work \cite{xu2018}.

It is certainly worth investigating whether other methods, such as recent differential Galois theory \cite{dreyfus2018} and the Tutte invariant method \cite{bernadicounting} can be adapted to handle these interacting boundary problems.

\section*{Acknowledgements}

Financial support from the Australian Research
Council via its Discovery schemes (DE170100186 and DP160103562)
is gratefully acknowledged by N.~R.~Beaton and A.~L.~Owczarek respectively.
A.~Rechnitzer acknowledges support from NSERC Canada via a Discovery Project Grant.
N.~R.~Beaton and A.~Rechnitzer also recieved support from the PIMS Collaborative
Research Group in Applied Combinatorics.


\end{document}